\newtheorem{thm}{Theorem}[section]
\newtheorem{lem}[thm]{Lemma}
\theoremstyle{definition}
\newtheorem{defn}[thm]{Definition}
\newtheorem*{rmk}{Remark}
\newtheorem{question}{Question}
\numberwithin{thm}{section}
\numberwithin{equation}{section}
\DeclareMathOperator{\Vol}{Vol}
\DeclareMathOperator{\Area}{Area}
\DeclareMathOperator{\spt}{spt}
\DeclareMathOperator{\thin}{thin}
\DeclareMathOperator{\Int}{Int}
\DeclareMathOperator{\Index}{Index}
\DeclareMathOperator{\rel}{rel}
\DeclareMathOperator{\Tr}{Tr}
\DeclareMathOperator{\reg}{reg}
\DeclareMathOperator{\sided}{1-sided}
\DeclareMathOperator{\dist}{dist}
\DeclareMathOperator{\Lip}{Lip}
\DeclareMathOperator{\Ric}{Ric}
\DeclareMathOperator{\Hess}{Hess}
\DeclareMathOperator{\av}{av}
\DeclareMathOperator{\divergence}{div}
\begin{document}
\title{Generic Scarring for Minimal Hypersurfaces in Manifolds Thick at Infinity with a Thin Foliation at Infinity}
\author{Xingzhe Li}
\address{Department of Mathematics, Cornell University, Ithaca, NY 14853, USA}
\email{xl833@cornell.edu}

\begin{abstract}
    We show generic scarring phenomenon for minimal hypersurfaces in a class of complete non-compact manifolds. In particular, we prove that for any metric $g$ in a $C^{\infty}$-generic subset of the family of complete metrics which are thick at infinity with a thin foliation at infinity on a fixed $M^{n+1}$ of dimension $3 \leq (n + 1) \leq 7$, to any connected, closed, embedded, $2$-sided, stable minimal hypersurface $S \subset (M, g)$, there exists a sequence of closed, embedded, minimal hypersurfaces $\{\Sigma_{k}\}$ scarring along $S$, in the sense that the area of $\Sigma_{k}$ diverges to infinity, and when properly renormalized, $\Sigma_{k}$ converges to $S$ as varifolds.   
\end{abstract}
\maketitle

\section{Introduction}

The search for closed minimal hypersurfaces in compact manifolds has enjoyed significant progress in recent years. The Almgren-Pitts' min-max theory was much improved starting from the work of Marques-Neves \cite{Marques-Neves14, Marques-Neves16, Marques-Neves17, Marques-Neves18}. Subsequent exciting developments include the establishment of the Weyl law for area functional \cite{Liokumovich-Marques-Neves16}, the resolution of Yau’s Conjecture on the existence of infinitely many closed minimal hypersurfaces \cite{Song18}, the settlement of Multiplicity One Conjecture \cite{Zhou19} raised by Marques-Neves \cite{Marques-Neves16, Marques-Neves18} (see also \cite{Chodosh-Mantoulidis20}), and the proof of Four Spheres Theorem \cite{Wang-Zhou23-four-spheres}. About the same time, Li-Zhou developed the min-max theory for free boundary minimal hypersurfaces in the general Almgren-Pitts setting \cite{Li-Zhou16}. We have seen further advancement concerning free boundary minimal hypersurfaces inspired by results for closed minimal hypersurfaces \cite{Ambrozio-Carlotto-Sharp17, Guang-Li-Wang-Zhou19, Sun-Wang-Zhou20, Wang2021existence}. For minimal hypersurfaces in complete non-compact manifolds, there are comparably few results and most of them focus on the existence aspect \cite{Gro14, CHMR14, Chambers-Liokumovich16, CK18, Song19, Douglas23}.       

In contrast to the extensive theory on the existence of minimal hypersurfaces, there are few general results describing the possible spatial distributions of minimal hypersurfaces in an ambient manifold. In \cite{Marques-Neves-Song19}, Marques-Neves-Song proved that generically in dimensions $3$-$7$, there is a sequence of minimal hypersurfaces equidistributing on average. Inspired by \cite{Marques-Neves-Song19}, Li-Staffa \cite{LS23} verified the generic equidistribution of closed geodesics in dimension $2$ and embedded stationary geodesic nets in dimension $3$. On the other hand, Song-Zhou \cite{Song-Zhou21} showed that generically in dimensions $3$-$7$, scarring for minimal hypersurfaces happens as soon as there is a stable hypersurface. 
It is important to note that for all results above, the ambient manifolds are compact. Then a natural question to ask is: does generic equidistribution or scarring happen in complete non-compact manifolds? In this paper, we partially answer this question by generalizing \cite[Theorem 0.1]{Song-Zhou21} to a class of complete manifolds. We show that for manifolds thick at infinity with a thin foliation at infinity, generically in dimensions $3$-$7$, scarring for minimal hypersurfaces happens as soon as there is a stable hypersurface. Denote by $\mathcal{F}_{\thin} \cap \Int(\mathcal{T}_{\infty})$ the family of complete metrics on $M$ thick at infinity with a thin foliation at infinity. 
These conditions on the metric, being crucial to control the geometry outside a compact region, will be specified later in this section.

\begin{thm}[Main Theorem] \label{MAIN}
    Let $M^{n + 1}$ be a smooth manifold of dimension $3 \leq (n + 1) \leq 7$. For any metric $g$ in a $C^{\infty}$-generic subset of $\mathcal{F}_{\thin} \cap \Int(\mathcal{T}_{\infty})$ in the sense of Baire, the following holds. For any connected, closed, embedded, $2$-sided minimal hypersurface $S \subset (M, g)$ which is stable, there is a sequence of closed, embedded minimal hypersurfaces $\{\Sigma_{k}\}$ with the following properties: 
    \begin{enumerate}[label=\emph{(\roman*)}]
    \item $\Sigma_{k} \cap S = \emptyset$,
    \item $\lim_{k \rightarrow \infty} ||\Sigma_{k}|| = \infty$,
    \item $\lim_{k \rightarrow \infty}\mathbf{F}\left( \frac{[S]}{||S||}, \frac{[\Sigma_{k}]}{||\Sigma_{k}||} \right) = 0$.
    \end{enumerate}
\end{thm}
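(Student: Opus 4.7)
The plan is to adapt the scarring strategy of Song-Zhou \cite{Song-Zhou21} from the compact setting to the present non-compact one, using the thick-at-infinity and thin-foliation-at-infinity hypotheses as substitutes for compactness. The proof naturally divides into three phases: (i) reduce the search for closed embedded minimal hypersurfaces to a fixed compact region using barriers supplied by the thin foliation; (ii) for a fixed stable $S$ and each integer $p \geq 1$, construct a min-max sweepout of a tubular neighborhood of $S$ whose width is comparable to $2p\|S\|$ and is realized by a closed embedded minimal hypersurface varifold-close to $p[S]$; and (iii) promote this perturbative statement to a $C^{\infty}$-generic one via a Baire-category argument.

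For phase (i), the thin foliation at infinity provides, outside some compact $K$, a sequence of mean-convex leaves of arbitrarily small area, so by the strong maximum principle any closed embedded minimal hypersurface of area below a chosen threshold is trapped inside $K$. Thickness at infinity supplies the a priori curvature and injectivity-radius control needed for the usual compactness theorems for minimal hypersurfaces of bounded area and index. This puts us effectively in a compact setting on $K$ and allows direct application of the Almgren-Pitts machinery, Zhou's multiplicity-one theorem, and White's bumpy metric theorem, all relativized to the space $\mathcal{F}_{\thin} \cap \Int(\mathcal{T}_{\infty})$.

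For phase (ii), following \cite{Song-Zhou21} I would build, for each $p$, a sweepout $\Phi_{p}$ of a small tubular neighborhood of $S$ of fixed topology obtained by stacking $p$ copies of a one-parameter deformation of $S$ in disjoint parallel slabs, so that the width $L(\Phi_{p})$ is within $o(1)$ of $2p\|S\|$. The Almgren-Pitts discretization, combined with multiplicity-one and the regularity of $2$-sided stable minimal hypersurfaces in dimensions $3 \leq n + 1 \leq 7$, produces a closed embedded minimal hypersurface $\Sigma_{p}$ realizing this width. Stability of $S$ forces a matching lower bound $L(\Phi_{p}) \geq 2p\|S\| - o(1)$, and the near-equality case of monotonicity forces $[\Sigma_{p}]/\|\Sigma_{p}\|$ to converge as varifolds to $[S]/\|S\|$. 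A small generic perturbation ensures the disjointness $\Sigma_{p} \cap S = \emptyset$.

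For phase (iii), the desired generic subset is realized as a countable intersection of open-dense sets, indexed by a countable dense family of candidate stable hypersurfaces $S$ together with integer thresholds $(N, 1/\epsilon)$: openness uses smooth dependence of nondegenerate minimal hypersurfaces on the metric, and density uses phase (ii) combined with White's bumpy metric theorem. The main obstacle I anticipate is phase (ii): certifying that the min-max output really concentrates near $S$ rather than escaping to infinity or spreading into the complement of the neighborhood. Achieving the matching upper and lower width estimates simultaneously depends delicately on the thin foliation continuing to act as a barrier under the sweepout deformation, on multiplicity-one ruling out higher-multiplicity collapse, and on a careful adaptation of Song-Zhou's monotonicity-based concentration argument to the compact core $K$.
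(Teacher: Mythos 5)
Your high-level outline (reduce to a perturbative statement for a fixed stable $S$, then Baire category) matches the paper's skeleton, but the mechanism you propose in phase (ii) has a genuine gap, and the roles you assign to the two metric hypotheses are not the ones the paper actually uses.

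The core difficulty is that a sweepout of a tubular neighborhood of $S$, built by stacking $p$ deformed copies of $S$, only gives an \emph{upper bound} for a width. You then assert that stability supplies a matching lower bound and that ``near-equality in monotonicity'' forces the min-max output to be varifold-close to $p[S]$. Neither inference is sound: near-equality of width bounds does not constrain \emph{where} the critical varifold sits (it could be an entirely different minimal hypersurface, possibly with multiplicity, whose area happens to fit), and the sublinear remainder in the Weyl law is of size $p^{1/(n+1)}$, not $o(1)$, so the bounds are not nearly tight anyway. The paper's actual mechanism is quite different and cannot be bypassed: one attaches an infinite cylinder to the compact exhaustion piece $\hat{M}_m$ along $S$ to form $\tilde{\mathcal{C}}(\hat{M}_m)$ and defines the single-cylindrical width $\tilde{\omega}_p(\hat{M}_m)$; one perturbs the metric by $g_t = g + t\varphi g$ with $\varphi$ a cutoff supported in a $1/(2k)$-neighborhood of $S$; one shows $t \mapsto \tilde{\omega}_p(\hat{M}_{m,g_t},\hat{g}_{m,t})/p$ is Lipschitz and has a derivative formula as an integral of $\mathrm{Tr}_{\Gamma_{m,p}}(\partial_t g_t)$ over the min-max surface; and one applies the Fundamental Theorem of Calculus, together with the single-cylindrical Weyl law, to locate a time $s$ at which this derivative nearly equals $\frac{d}{dt}\mathrm{Area}_{g_t}(S)$, which forces the mass of $\Gamma_{m,p_k}$ to concentrate in the support of $\varphi$, i.e.\ near $S$. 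That is the only place concentration near $S$ comes from, and it is entirely absent from your proposal.

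You also misassign the two structural hypotheses. The thin foliation at infinity is used to prove that $\omega_p(M,g)=f(p)$ is finite with $f(p)/p\to 0$, which feeds into a bound $\tilde{\omega}_p(\hat{M}_m,\hat{g}_m) \leq p\,\mathrm{Area}(S) + \mathrm{Area}(S) + f(p) + 2$ that is \emph{uniform in $m$}; it is not a supply of mean-convex barriers (the leaves are only required to have small volume), and it does not trap minimal hypersurfaces of large area, which the $\Gamma_{m,p_k}$ certainly have. Thick at infinity is used quite differently from ``curvature and injectivity-radius control'': it guarantees that the subsequential limit $\Sigma$ of the components $\tilde{\Gamma}_{m,p_k}$ concentrated near $S$, being a complete finite-volume minimal hypersurface in $(M,\tilde{g}_s)$, must be closed, and combined with (immersed/embedded) bumpy-ness it yields non-degeneracy and multiplicity one. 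Finally, you do not address the fact that the min-max surfaces $\Gamma_{m,p}$ produced in $\hat{M}_m$ may have \emph{free boundary} in $\partial\hat{M}_m\setminus S$, which forces the paper to build a free-boundary analogue of the codimension-one Banach submanifold $\widetilde{\mathcal{M}}_{\sided}$ and to perturb to strongly bumpy metrics in the sense of Sun--Wang--Zhou; this is one of the paper's main new technical contributions and your outline does not touch it.
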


\begin{rmk}
    This theorem can be generalized to $1$-sided minimal hypersurface $S$ with stable $2$-sided double cover. Unlike \cite[Theorem 0.1]{Song-Zhou21}, we are unable to provide an explicit quantitative estimate in item (iii) above 
    because this depends on the quantitative behavior of ends. Our proof follows generally the approach in \cite{Song-Zhou21}, but there are essential new challenges that inspire advancement:
    \begin{enumerate}[label=(\alph*)]
        \item Min-max theorem associated with single-cylindrical width and uniform bound of single-cylindrical width (see Section \ref{twopointone} for the definition of single-cylindrical width);
        \item A sequence of compact, almost properly embedded, minimal hypersurfaces produced in compact exhaustions being confined in a compact region (no escape to infinity);
        \item Perturbation of a family of metrics to satisfy ``free boundary bumpyness''. 
    \end{enumerate} 
\end{rmk}

\subsection*{Motivations}  
Minimal hypersurfaces can be viewed as non-linear geometric analogues of the Laplacian eigenfunctions. There is an extensive study of equidistribution and scarring in spectral theory and ergodic theory. While the quantum ergodicity theorem maintains that the normalized densities of a density one subsequence of eigenfunctions individually equidistribute, the Quantum Unique Ergodicity conjecture anticipates that the normalized densities of the full sequence should equidistribute. However, in certain generic situations the $L^{2}$-densities may ``scar'' along subregions rather than equidistribute. In cases where the ambient manifolds are compact, \cite{Song-Zhou21} provides a comprehensive overview of related results \cite{BL67, Zel87, Rudnick-Sarnak94, FSS03, Mar06, Col09, Hassell10, Zel17, KL17}. For complete non-compact ambient manifolds, results are comparatively few, and we give a non-exhaustive list here. In \cite{Zel91}, Zelditch showed that the quantum ergodicity property extends to non-compact hyperbolic surfaces of finite area. Bonthonneau-Zelditch, in their work \cite{BZ16}, gave a new and simpler proof of that result which generalizes to any cusped manifolds with ergodic geodesic flows. In \cite{Bon16}, Bonthonneau proved an Egorov lemma until Ehrenfest times and then gave a version of Quantum Unique Ergodicity for the Eisenstein series under an asymptotic constraint. In \cite{Stu21}, Studnia proved quantum ergodicity for the eigenfunctions of the pseudo-Laplacian on surfaces with hyperbolic cusps and ergodic geodesic flow. Unlike the results above, Marklof showed that under the no extreme level clustering condition, eigenfunctions of the Laplacian on certain non-compact domains in $\mathbb{R}^{2}$ with finite area may localize at infinity and hence rule out Quantum Unique Ergodicity for such systems \cite{Mar05}.  
\\
\\
\noindent \textbf{The class $\mathcal{T}_{\infty}$ of manifolds thick at infinity.} Let us revisit the notion of ``thick at infinity'' put forth by Gromov in \cite{Gro14} and weaken by Song in \cite{Song19}. This condition is crucial for the implementation of White's Manifold Structure Theorem \cite{White91, White17}. We refer to Section \ref{twopointthree} for more details. 

\begin{defn}
    Let $(M, g)$ be a complete $(n + 1)$-dimensional Riemannian manifold. $(M, g)$ is said to be thick at infinity if any connected finite volume complete minimal hypersurface in $(M, g)$ is closed. We denote by $\mathcal{T}_{\infty}$ the class of manifolds that are thick at infinity. 
\end{defn}

\noindent \textbf{The class $\mathcal{F}_{\thin}$ of manifolds with a thin foliation at infinity.} Another key condition is ``thin foliation at infinity'' introduced by Song in \cite{Song19}. This ensures that the $p$-width of any manifold in the class grows sublinearly in $p$ as $p \rightarrow \infty$. We refer to Section \ref{twopointtwo} for more details.   

\begin{defn}
    A complete $(n + 1)$-dimensional Riemannian manifold $(M, g)$ is said to have a thin foliation at infinity if there is a proper Morse function $f: M \rightarrow [0, \infty)$ so that
    \[
    \lim_{t \rightarrow \infty} \Vol_{n}(f^{-1}(t)) = 0. 
    \]
\end{defn}   

Let $M$ be a $(n + 1)$-dimensional smooth manifold. In our setting the natural topology on the space of complete metrics of $M$ is the strong $C^{\infty}$-topology, or ``Whitney $C^{\infty}$-topology'' (see \cite[Chapter 2]{Hir97} or \cite[Chapter II]{GG73} where it is defined for spaces of functions). As in \cite{Song19}, it can be described as follows. Given $b_1, b_2, \ldots$ a sequence of open balls forming a locally finite covering of $M$, any complete metric $g$, a non-negative integer $q$, and $\mathbf{e} = (\epsilon_1, \epsilon_2, \ldots)$ a sequence of positive numbers, set 
\[
O(g, \mathbf{e}, q) := \bigcap_{i = 1}^{\infty} \{g': ||(g' - g)|_{b_i}||_{C^{q}} < \epsilon_i\}. 
\]
Then the strong $C^{q}$-topology (resp. strong $C^{\infty}$-topology) on the space of complete metrics of $M$ has a base of open sets given by $\{O(g, \mathbf{e}, q)\}_{g, \mathbf{e}}$ (resp. $\{O(g, \mathbf{e}, q)\}_{g, \mathbf{e}, q}$). Endowed with the strong topology, the space of complete metrics becomes a Baire space \cite[Chapter 2, Theorem 4.4]{Hir97}. We denote by $\mathcal{F}_{\thin}$ (resp. $\mathcal{T}_{\infty}$) the family of complete metrics on $M$ with a thin foliation at infinity (resp. thick at infinity). Let $\Int(\mathcal{T}_{\infty})$ be the interior of $\mathcal{T}_{\infty}$ in the space of complete metrics. The intersection $\mathcal{F}_{\thin} \cap \Int(\mathcal{T}_{\infty})$ is a non-empty open set in the strong $C^{\infty}$-topology. Note that if $M$ is compact, any metric on $M$ is in $\mathcal{F}_{\thin} \cap \Int(\mathcal{T}_{\infty})$. 

For this class of metrics on $M$, Song proved the following result that extends the density theorem of Irie-Marques-Neves in \cite{Irie-Marques-Neves18}.

\begin{thm}[{\cite[Theorem 1.4]{Song19}}] \label{GODT}   
    Let $M^{n + 1}$ be a smooth manifold of dimension $3 \leq (n + 1) \leq 7$.\\ 
    $(1)$ For any metric $g$ in a $C^{\infty}$-dense subset of $\mathcal{F}_{\thin}$, the union of complete, finite volume, embedded minimal hypersurfaces in $(M, g)$ is dense.\\
    $(2)$ For any metric $g'$ in a $C^{\infty}$-generic subset of $\mathcal{F}_{\thin} \cap \Int(\mathcal{T}_{\infty})$ in the sense of Baire, the union of closed, embedded minimal hypersurfaces in $(M, g')$ is dense.
\end{thm}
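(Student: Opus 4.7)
The plan is to adapt the Irie--Marques--Neves density argument \cite{Irie-Marques-Neves18} from the compact to the non-compact setting, using the thin foliation and thick-at-infinity hypotheses in place of compactness. For a metric $g \in \mathcal{F}_{\thin}$, fix the Morse function $f \colon M \to [0,\infty)$ from the definition of $\mathcal{F}_{\thin}$ and choose regular values $t_i \to \infty$ so that the compact exhaustion $M_i := f^{-1}([0, t_i])$ satisfies $\Vol_n(\partial M_i) \to 0$. On each compact manifold with boundary $M_i$ one runs a (free-boundary) Almgren--Pitts $p$-width construction to produce, for every $p$, a stationary integral varifold of mass $\omega_p(M_i, g)$. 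Define $\omega_p(M, g) := \liminf_i \omega_p(M_i, g)$ and use the vanishing boundary volumes to verify that each $\omega_p(M, g)$ is finite, continuous in $g$ with respect to the Whitney $C^{\infty}$-topology on $\mathcal{F}_{\thin}$, and satisfies a sublinear growth estimate in the spirit of the Weyl law of \cite{Liokumovich-Marques-Neves16}, tying its leading term to $\Vol(M_i, g)$.

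For Part $(1)$, given a relatively compact open set $U \subset M$, define the set $O_{p,q}(U) \subset \mathcal{F}_{\thin}$ of metrics admitting a complete, finite-volume, embedded minimal hypersurface of area in $(\omega_p(g) - 1/q,\, \omega_p(g) + 1/q)$ that intersects $U$. Density of $\bigcup_{p,q} O_{p,q}(U)$ in $\mathcal{F}_{\thin}$ follows from the IMN contradiction argument: if some nonempty open $\mathcal{V} \subset \mathcal{F}_{\thin}$ were disjoint from this union, then after a compactly supported bumpy perturbation one could take $g' \in \mathcal{V}$ bumpy, so every $\omega_p(g')$ is realized by a minimal hypersurface disjoint from $\overline{U}$. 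The one-parameter conformal deformation $g_s := (1 + s\chi) g'$ with $\chi$ a nonnegative bump supported in $U$ leaves those hypersurfaces stationary and of unchanged area, so the realized widths would be constant in $s$; yet the sublinear Weyl-type estimate forces $\omega_p(g_s)$ to vary with $\Vol(M_i, g_s)$ for large $p$, a contradiction. Intersecting $\bigcup_{p,q} O_{p,q}(U_k)$ over a countable basis $\{U_k\}$ of $M$ produces the desired dense subset of metrics in $\mathcal{F}_{\thin}$.

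For Part $(2)$, replace $\mathcal{F}_{\thin}$ by the open set $\mathcal{F}_{\thin} \cap \Int(\mathcal{T}_{\infty})$. By the definition of $\mathcal{T}_{\infty}$, any complete finite-volume embedded minimal hypersurface is automatically closed, so the hypersurfaces produced above are closed embedded minimal hypersurfaces. Moreover, on $\Int(\mathcal{T}_{\infty})$, the compactness of closed minimal hypersurfaces with uniformly bounded area combined with continuity of $\omega_p$ and White's structure theorem \cite{White91, White17} makes each $O_{p,q}(U_k)$ open, and its density follows from the argument of Part $(1)$. The $G_\delta$ intersection $\bigcap_k \bigcup_{p,q} O_{p,q}(U_k)$ is then dense in $\mathcal{F}_{\thin} \cap \Int(\mathcal{T}_{\infty})$ by the Baire category theorem, yielding the generic subset.

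The principal obstacle I anticipate is controlling the min-max minimal hypersurfaces produced in the compact exhaustions: a priori they could drift to infinity as $i$ grows, concentrate on $\partial M_i$, or fail to close up into a finite-volume hypersurface in $M$. Here the thin-foliation hypothesis $\Vol_n(\partial M_i) \to 0$ is essential, since it provides effective barriers preventing mass from leaking across $\partial M_i$ and pins the $p$-widths to be attained in a fixed compact region depending only on $p$; the thick-at-infinity condition then upgrades finite-volume limits to closed ones. A secondary technical point is the bumpy-metric theorem in the Whitney $C^{\infty}$-topology on a non-compact manifold: since the hypersurfaces in play have finite volume and can be confined to a compact region by the previous estimate, the standard compactly supported transversality argument of White applies, and such perturbations manifestly remain in $\mathcal{F}_{\thin}$ and in $\Int(\mathcal{T}_{\infty})$.
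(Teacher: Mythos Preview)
The paper does not contain its own proof of this statement: Theorem~\ref{GODT} is quoted verbatim from \cite[Theorem~1.4]{Song19} as background, and no argument is supplied here. So there is no in-paper proof to compare your proposal against.

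That said, your sketch is broadly along the lines of Song's adaptation of the Irie--Marques--Neves scheme, but two points deserve tightening. First, in the contradiction for Part~(1) you write that the realized widths ``would be constant in $s$''; what you actually get is that each $\omega_p(g_s)$ lies in the \emph{countable} set of possible $g'$-areas (since $g_s=g'$ outside $U$ and any $g_s$-minimal hypersurface avoiding $U$ is $g'$-minimal), while $s\mapsto\omega_p(g_s)$ is continuous and nonconstant---that is the genuine contradiction. Second, your invocation of a ``sublinear Weyl-type estimate tying the leading term to $\Vol(M_i,g)$'' is off: for metrics in $\mathcal{F}_{\thin}$ one has $\omega_p(M,g)/p\to 0$ (this is exactly Lemma~\ref{SAB} in the present paper), and there is no volume-normalized Weyl law for $M$ itself. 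The nonconstancy of $s\mapsto\omega_p(g_s)$ comes instead from strict monotonicity of widths under a conformal increase of the metric on $U$, not from a global Weyl asymptotic. Your identification of the main obstacle---preventing the min-max hypersurfaces from escaping to infinity, with the thin foliation controlling mass near $\partial M_i$ and thickness-at-infinity forcing closedness---is exactly right and is the heart of Song's argument.
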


We refer to \cite{Song19} for further remarks and examples about different classes of complete non-compact Riemannian manifolds.

Theorems \ref{MAIN} and \ref{GODT} give rise to the following open questions. 

\begin{question}
    Let $M^{n + 1}$ be a smooth manifold of dimension $3 \leq (n + 1) \leq 7$. Is it the case that for any metric $g$ in a $C^{\infty}$-dense subset of $\mathcal{F}_{\thin}$, the following holds: for any connected, closed, embedded, $2$-sided minimal hypersurface $S \subset (M, g)$ that is strictly stable, there is a sequence of complete, finite volume, embedded minimal hypersurfaces $\{\Sigma_{k}\}$ satisfying properties (i), (ii), and (iii) as in Theorem \ref{MAIN}? What conditions on the metric shall we impose to make $\Sigma_{k}$ non-compact?    
\end{question}

\begin{question}
    Does the scarring phenomenon occurs if $S$ is a connected, complete, non-compact, finite volume, embedded $2$-sided minimal hypersurface that is strictly stable? 
\end{question}

\subsection*{Challenges and ideas}
The proof of Theorem \ref{MAIN} relies on a perturbation argument applied to the asymptotic formulas for certain min-max widths. Such a strategy was adopted in \cite{Marques-Neves-Song19} to the Weyl law for the volume spectrum, and in \cite{Song-Zhou21} to the cylindrical Weyl law for widths associated with a given strictly stable minimal surface $S$. In our case, $M$ is in general non-compact and may not obey the Weyl law. To bypass this, we consider a compact exhaustion $\{M_{m}\}$ of $M$, which contains the strictly stable minimal surface $S$ for $m$ large enough. By picking a connected component of $M_{m} \setminus S$ and taking the metric completion, we obtain a compact manifold $\hat{M}_{m}$ with boundary $\partial \hat{M}_{m}$ containing $S$ as a hypersurface. Following the construction of \cite{Song18}, for each large $m$ we form a non-compact manifold $\tilde{\mathcal{C}}(\hat{M}_{m})$ with a single cylindrical end attached along $S$. One technical issue is that the single-cylindrical Weyl law for $\tilde{\omega}_{p}(\hat{M}_{m})$ may depend on $m$, so we might lose control over the remainder terms as $m \rightarrow \infty$. To fix this, we use the assumption $g \in \mathcal{F}_{\thin}$ to show that the growth of $\tilde{\omega}_{p}(\hat{M}_{m})/p$ is sufficiently close to $\Area(S)$ as $m \rightarrow \infty$.  

In \cite{Song-Zhou21}, a key ingredient of the perturbation argument involved the study of a codimension $1$ Banach submanifold in the space of minimal embeddings. In our scenario, the minimal hypersurface produced by $\tilde{\omega}_{p}(\hat{M}_{m})$ may have free boundary in $\partial \hat{M}_{m} \setminus S$. 
Therefore, when perturbing the family of metrics on $\hat{M}_{m}$, we need to investigate a codimension $1$ Banach submanifold in the space of minimal embeddings with possibly non-empty free boundary. To achieve this, we analyze the first eigenvalue of an elliptic PDE subject to the boundary condition and apply the Implicit Function Theorem to conclude.  

Another important difference with \cite{Song-Zhou21} is that the perturbation argument is $2$-fold: both global and local. On one hand, we perturb the family of metrics $g_{t}$ on $M$ to $\tilde{g}_{t}$ in the strong $C^{\infty}$-topology. On the other hand, we perturb the family of restrictions $\tilde{g}_t$ on $M_{m}$ to $\tilde{g}_{m, t}$ in the $C^{\infty}$-topology so that $\tilde{g}_{m, t}$ converges smoothly and locally uniformly to $\tilde{g}_{t}$ as $m \rightarrow \infty$. By varying $m$, we obtain a sequence of compact, almost properly embedded free boundary minimal hypersurfaces in $M \setminus S$. A priori this sequence might escape to infinity and no compactness theorem applies to yield a subsequence limit. To resolve this, we observe that the mass of those minimal hypersurfaces are concentrated near $S$. By adjusting the sequence and arguing similarly as in the proof of Theorem \ref{GODT}, we obtain a subsequence limit that is closed, non-degenerate, multiplicity one, and has large area. By the quantitative constancy theorem for complete manifolds, the limit hypersurface approximates $S$ after renormalization. The final scarring result follows from an induction argument on all strictly stable minimal hypersurfaces.      

\subsection*{Organization} The paper is organized as follows. In Section \ref{sectiontwo}, we introduce the single-cylindrical width and the associated min-max theorem. In particular, we derive some asymptotic estimates and a derivative formula of the single-cylindrical width. In Section \ref{sectionthree}, a perturbation argument is employed to confirm a key deformation result. We finish the proof of our generic scarring theorem in Section \ref{sectionfour}. In the ``Appendix'', we give a proof of the quantitative constancy theorem for complete manifolds.      

\subsection*{Acknowledgements}

I am grateful to my advisor Xin Zhou for suggesting this problem. I would like to thank him for his constant support and helpful discussions. Thanks to Antoine Song for reading the draft and useful comments to improve the presentation. Part of this work was done when I visited Princeton University and I would like to thank Princeton University for their hospitality. The work was partially supported by NSF grant DMS-1945178. 

\section{Single-Cylindrical Width and Associated Min-max Theory} \label{sectiontwo}

\subsection{Min-max theory in compact manifolds with boundary} \label{twopointone}

Let $(\hat{M}, \partial \hat{M}, g)$ denote a compact connected $(n + 1)$-dimensional smooth manifold with boundary endowed with a $C^{q}$ metric with $q \geq 3$. Suppose that $\partial \hat{M}$ is a disjoint union of connected hypersurfaces $\{S_{0} = S, S_{1}, \ldots, S_{l}\}$, with the extra assumption that $S$ is a strictly stable minimal hypersurface. Since $S$ admits a strictly mean convex foliation, there is a diffeomorphism 
\[
\Phi: S \times [0, \hat{t}] \rightarrow S
\]
where $\Phi(S \times \{0\}) = S$ is a minimal surface, and for all $t \in (0, \hat{t}]$, the leaf $\Phi(S \times \{t\})$ has non-zero mean curvature vector pointing towards $S$. 

By attaching an infinite cylinder $S \times [0, \infty)$ to $\hat{M}$ via the canonical identity map $\varphi: S \times \{0\} \rightarrow S$, we obtain the following non-compact manifold with a single cylindrical end $\tilde{\mathcal{C}}(\hat{M})$:
\[
\tilde{\mathcal{C}}(\hat{M}) = \hat{M} \cup_{\varphi} (S \times [0, \infty)), 
\]
Note that $\tilde{\mathcal{C}}(\hat{M})$ is a non-compact smooth manifold with boundary endowed with a Lipschitz metric $h$ satisfying $h|_{\hat{M}} = g|_{\hat{M}}$ and $h|_{S \times [0, \infty)} = g|_{S} \oplus dt^{2}$.  

Consider an exhaustion of $\tilde{\mathcal{C}}(\hat{M})$ by compact subsets $K_1 \subset K_2 \subset \cdots \subset \tilde{\mathcal{C}}(\hat{M})$. The number introduced below is well-defined and independent of the choices of the compact exhaustion $\{K_{i}\}$.   

\begin{defn}
    For each positive integer $p$, the single-cylindrical $p$-width of $(\hat{M}, g)$ is defined as 
    \begin{equation}
        \tilde{\omega}_{p}(\hat{M}, g) := \omega_{p}(\tilde{\mathcal{C}}(\hat{M}), h) := \lim_{i \rightarrow \infty} \omega_{p}(K_{i}, h).  
    \end{equation}
    Here, $\omega_{p}(K_{i}, h)$ is the $p$-width of $(K_i, h)$ defined in \cite{Marques-Neves17}. The sequence $\{\tilde{\omega}_{p}(\hat{M}, g)\}_{p}$ will be called single-cylindrical volume spectrum. 
\end{defn}

These widths satisfy a single-cylindrical Weyl law. The proof imitates the proof of the cylindrical Weyl law in \cite{Song18}, which is skipped here.  

\begin{thm}[Single-Cylindrical Weyl Law]\label{WLSC}
    With the notations above, $\tilde{\omega}_{p}(\hat{M}, g)$ is finite for all $p$ and there is a constant $C = C(\hat{M}, g) > 0$ such that 
        \begin{equation}
        p \cdot \Area(S) \leq \tilde{\omega}_{p}(\hat{M}, g) \leq p \cdot \Area(S) + C(\hat{M}, g) p^{\frac{1}{n + 1}}.
        \end{equation}
\end{thm} 

There is a min-max theorem associated with single-cylindrical $p$-width, which in some sense combines the cylindrical min-max theorem \cite{Song18} and the free boundary min-max theorem \cite{Li-Zhou16, Sun-Wang-Zhou20}. To obtain the multiplicity one of free boundary minimal hypersurfaces produced by min-max, it is necessary to impose a condition on the metric $g$, as outlined in \cite{Sun-Wang-Zhou20}:

\begin{defn}
    A metric $g$ on $\hat{M}$ is called strongly bumpy if 
    \begin{enumerate}[label={(\roman*)}]
        \item every finite cover of an immersed free boundary minimal hypersurface is non-degenerate,
        \item every immersed free boundary minimal hypersurface is properly embedded (i.e. the touching set is empty).  
    \end{enumerate}
\end{defn}

By confirming an adapted version of White's Transversality Theorem for free boundary minimal hypersurfaces, Sun-Wang-Zhou proved the following strong bumpy metric theorem, which asserts that the set of strongly bumpy metrics is generic in the Baire sense. As a result, any metric on $\hat{M}$ can be slightly perturbed into a strongly bumpy metric.  

\begin{lem}[{\cite[Theorem 1.3]{Sun-Wang-Zhou20}}]
    For a generic metric on a compact Riemannian manifold with boundary, every embedded free boundary minimal hypersurface is both proper and non-degenerate. 
\end{lem}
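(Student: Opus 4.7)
The plan is to adapt White's structure theorem and bumpy metric theorem from the closed setting to the free boundary setting, since both conditions (non-degeneracy and properness) fit into a unified Sard--Smale framework applied to a suitable universal moduli space, though the properness clause requires an additional localized perturbation.

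First I would fix a large integer $q \geq 3$ and consider the Banach manifold $\mathcal{M}^q$ of $C^q$ Riemannian metrics on $\hat{M}$. For each smooth compact model $n$-manifold $\Sigma$ with boundary, define the universal moduli space
\[
\mathcal{N}(\Sigma) := \{(g, [\iota]) : \iota: \Sigma \to \hat{M} \text{ is a free boundary minimal immersion for } g\}/\mathrm{Diff}(\Sigma).
\]
Applying the Implicit Function Theorem to the defining system (zero mean curvature plus orthogonal contact $\iota(\partial \Sigma) \perp \partial \hat{M}$) and using elliptic Fredholm theory for the Jacobi operator with Robin-type boundary condition, one verifies that $\mathcal{N}(\Sigma)$ is a smooth Banach manifold and that the projection $\pi_\Sigma: \mathcal{N}(\Sigma) \to \mathcal{M}^q$ is Fredholm of index $0$. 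The kernel of $d\pi_\Sigma$ at $(g, \iota)$ is precisely the kernel of this boundary-value Jacobi operator, so $g$ is a regular value of $\pi_\Sigma$ exactly when every free boundary minimal immersion of $\Sigma$ into $(\hat{M}, g)$ is non-degenerate. Sard--Smale then produces a Baire-generic set of regular values in $\mathcal{M}^q$; taking a countable intersection over all diffeomorphism types of $\Sigma$ and over finite covers yields a generic $\mathcal{G}_1 \subset \mathcal{M}^q$ on which condition (i) of the strongly bumpy definition holds.

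To handle condition (ii), I would enlarge the moduli space by adjoining a marked interior point constrained to touch the boundary,
\[
\mathcal{N}^{*}(\Sigma) := \{(g, [\iota], p) : (g, [\iota]) \in \mathcal{N}(\Sigma),\ p \in \Sigma \setminus \partial \Sigma,\ \iota(p) \in \partial \hat{M},\ d\iota(T_p \Sigma) \subset T_{\iota(p)} \partial \hat{M}\}.
\]
The incidence constraint is codimension $1$ and the tangency constraint adds codimension $n$, so if the resulting projection to $\mathcal{M}^q$ is Fredholm, its index is negative and the generic fiber must be empty. To establish the requisite transversality I would follow White's local perturbation strategy: construct a compactly supported normal deformation of the metric near a putative touching point $p$ that alters the shape operator of $\partial \hat{M}$ there; the Implicit Function Theorem produces a nearby free boundary minimal hypersurface, and a direct computation shows that the first-order variation of the tangency condition surjects onto the relevant finite-dimensional cokernel. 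Sard--Smale then yields a generic $\mathcal{G}_2 \subset \mathcal{G}_1$ of strongly bumpy $C^q$ metrics.

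Finally, to pass from $C^q$ to $C^\infty$ I would invoke the standard truncation-and-mollification argument of White to show that $\bigcap_{q \geq 3} \mathcal{G}_2^{(q)}$, intersected with the space of smooth metrics, remains Baire-generic in the strong $C^\infty$-topology. The main obstacle lies in the properness step: the tangency-breaking metric variation must be compatible simultaneously with the interior minimal surface equation on $\Sigma$ and with the orthogonal boundary condition along $\partial \Sigma$, and proving that the combined linearization is surjective onto its cokernel requires a delicate free boundary refinement of White's local perturbation lemma. This is precisely the technical heart of the argument carried out in \cite{Sun-Wang-Zhou20}, and any self-contained execution of the plan must confront it head-on.
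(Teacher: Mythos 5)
The paper does not actually prove this lemma; it is stated verbatim and cited from Sun--Wang--Zhou, so there is no internal argument for your sketch to be compared against. That said, your reconstruction via Sard--Smale on decorated moduli spaces is the right framework, and you correctly flag the free-boundary transversality lemma as the technical crux. But there is a genuine gap in the codimension count for the properness step.

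You declare $\mathcal{N}(\Sigma)$ to consist of maps $\iota: \Sigma \to \hat{M}$, i.e.\ into the compact manifold \emph{with} boundary, and then add a marked interior point $p$ with the incidence constraint $\iota(p) \in \partial\hat{M}$ (codimension $1$) and the tangency constraint $d\iota(T_p\Sigma) \subset T_{\iota(p)}\partial\hat{M}$ (codimension $n$), concluding index $0 + n - 1 - n = -1$. However, if the image $\iota(\Sigma)$ is constrained to lie in $\hat{M}$, then an interior point $p$ with $\iota(p) \in \partial\hat{M}$ forces $\iota(\Sigma)$ to stay on one side of $\partial\hat{M}$ near $\iota(p)$; for two codimension-one objects meeting one-sidedly this automatically forces tangency. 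The two constraints are not independent in that setup, so the honest codimension drop is only $1$, the Fredholm index becomes $n - 1 \geq 0$, and Sard--Smale does not yield generic emptiness of the fiber. To salvage the count you need to enlarge the target: work with an ambient closed manifold $\tilde{M} \supset \hat{M}$ in which $N := \partial\hat{M}$ sits as a two-sided interior hypersurface, and define the universal moduli space as immersions into $\tilde{M}$ that meet $N$ orthogonally along $\partial\Sigma$ but are otherwise free to cross $N$. In that ambient setting incidence and tangency at the interior marked point are independent conditions and your $n - (1 + n) = -1$ count is legitimate; the actual touching configurations are then a closed subset of the negative-index stratum and inherit generic emptiness. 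This is precisely the device the paper itself uses in Section~\ref{sectionthree}, where $(M_m, \partial M_m)$ is isometrically embedded into a closed $\tilde{M}_m$ and $\widetilde{\mathcal{M}}$ is built over $\tilde{M}_m$ with $N = \partial M_m$; your sketch should be reorganized to use the same ambient-closed-manifold framework before the index argument is run.
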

 
At this point, we shall present the min-max theorem associated with single-cylindrical width. 

\begin{thm}\label{CMM}
    Let $(\tilde{C}(\hat{M}), h)$ be constructed as above. For each $p \in \mathbb{N}$, there exist a $C^{q}$ compact, almost properly embedded free boundary minimal hypersurface $\Gamma_{p}$ contained in $\hat{M} \setminus S$, whose connected components are called $\Gamma_{p}^{(1)}, \ldots, \Gamma_{p}^{(k_{p})}$, and associated positive integer multiplicities $m_{1}, \ldots, m_{k_{p}}$ such that 
    \begin{align}
    \begin{split}
        &\tilde{\omega}_{p}(\hat{M}, g) = \sum_{i = 1}^{k_p} m_i \Area(\Gamma^{(i)}_{p}), \hspace{10pt} \text{ and }\\
        &\Index(\Gamma_{p}) \leq p. 
    \end{split}
    \end{align}
    By assuming that $g$ is strongly bumpy, we can choose $\Gamma_{p}$ to be properly embedded so that each $\Gamma_{p}^{(i)}$ is $2$-sided and $m_{i} = 1$ for all $i \in \{1, \ldots, k_{p}\}$.    
\end{thm}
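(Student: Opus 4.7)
The plan is to adapt Song's cylindrical min-max construction \cite{Song18} to the free boundary setting, combining it with the free boundary min-max theory of Li-Zhou \cite{Li-Zhou16} and Sun-Wang-Zhou \cite{Sun-Wang-Zhou20}. Fix $p \in \mathbb{N}$ and take the exhaustion $K_{i} = \hat{M} \cup_{\varphi} (S \times [0, t_{i}])$ for some increasing $t_{i} \to \infty$; each $K_{i}$ is a compact manifold with piecewise smooth boundary $(\partial \hat{M} \setminus S) \cup (S \times \{t_{i}\})$. After mollifying $h$ in a small neighborhood of the Lipschitz seam $S \times \{0\}$ so that $S \times \{0\}$ remains minimal, I would apply the free boundary min-max theorem of \cite{Sun-Wang-Zhou20} to each $K_{i}$ to produce an almost properly embedded free boundary minimal hypersurface $\Gamma_{p, i} \subset K_{i}$ with $\|\Gamma_{p, i}\| = \omega_{p}(K_{i}, h)$ and $\Index(\Gamma_{p, i}) \leq p$. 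Theorem \ref{WLSC} gives a uniform area bound $\|\Gamma_{p, i}\| \leq p \cdot \Area(S) + C p^{1/(n + 1)}$.

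The crux of the argument is to show that $\{\Gamma_{p, i}\}_{i}$ is eventually contained in a fixed compact subset of $\tilde{\mathcal{C}}(\hat{M})$. Because $S$ is strictly stable, the mean convex foliation $\Phi$ near $S$ on the $\hat{M}$ side provides a one-sided barrier: the maximum principle prevents components of $\Gamma_{p, i}$ from touching $S$ from within $\hat{M}$. On the cylinder side, a catenoid-type estimate for the product metric $g|_{S} \oplus dt^{2}$ forces any connected minimal hypersurface with free boundary on $S \times \{a\}$ and $S \times \{b\}$, other than a slice, to have area growing with $|b - a|$. Combined with the uniform area bound above, this confines $\Gamma_{p, i}$ inside $\hat{M} \cup_{\varphi} (S \times [0, T])$ for some $T$ independent of $i$.

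With confinement in hand, I would pass to a subsequential varifold limit $\Gamma_{p}$ using compactness for almost properly embedded free boundary minimal hypersurfaces with bounded area and index, as in \cite{Ambrozio-Carlotto-Sharp17}. The barrier from strict stability shows $\Gamma_{p}$ has no component equal to $S$. Any component of $\Gamma_{p}$ supported in the open cylinder $S \times (0, \infty)$ would be a closed minimal hypersurface in a product manifold, hence a slice $S \times \{t_{0}\}$, which can be translated back across the seam and identified with the corresponding multiple of $S$; reorganizing cylindrical slices in this way produces $\Gamma_{p} \subset \hat{M} \setminus S$ satisfying the stated area identity $\tilde{\omega}_{p}(\hat{M}, g) = \sum_{i} m_{i} \Area(\Gamma_{p}^{(i)})$ and inheriting the index bound.

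For the final claim, when $g$ is strongly bumpy, I would invoke \cite[Theorem 1.2]{Sun-Wang-Zhou20} together with the free boundary multiplicity-one theorem established there to upgrade $\Gamma_{p}$ to a properly embedded $2$-sided free boundary minimal hypersurface with each $m_{i} = 1$. The main obstacle I anticipate is the no-escape step: the catenoid-type estimate in the product cylinder must be uniform along the exhaustion, and the subsequent varifold convergence must be compatible with the barrier from $S$ so that no mass is lost at infinity in the limit. A careful accounting in the spirit of \cite{Song18} of how the slice contributions saturate the leading $p \cdot \Area(S)$ term in Theorem \ref{WLSC} should be enough to rule out mass escape along the cylinder.
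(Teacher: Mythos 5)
Your overall architecture (compact approximations, free boundary min-max, varifold limit, bumpy upgrade via \cite{Sun-Wang-Zhou20} and \cite{Ambrozio-Carlotto-Sharp17}) parallels the paper's, but there are two genuine gaps in the confinement and localization steps, which are the heart of the argument.

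First, your ``catenoid-type estimate'' explicitly exempts slices, yet slices $S \times \{t_0\}$ are exactly the components that can escape: they have area $\Area(S)$ independent of $t_0$, free boundary nowhere, and index $0$, so nothing in your area/index bound confines them to $\hat{M} \cup (S \times [0, T])$ for a $T$ independent of $i$. So the no-escape step, which you correctly identify as the main obstacle, is not actually closed by the estimate you propose. The paper avoids this by following \cite[Theorem 9]{Song18}: the compact approximations $(U_\epsilon, h_\epsilon)$ are built so that a boundary component is the strictly mean-concave leaf $\Phi(S \times \{\epsilon\})$, and the monotonicity formula plus maximum principle confine $\Gamma_\epsilon$ away from that leaf and off the cylinder from the start; one then only needs the simple slice dichotomy and White's maximum principle \cite{White10} on the limit $V_p$, without any quantitative catenoid input.

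Second, the ``reorganizing cylindrical slices'' step is a logical error, not just a technicality. If the limit $\Gamma_p$ has a slice component $S \times \{t_0\}$ with $t_0 > 0$, translating it to $t = 0$ and ``identifying it with the corresponding multiple of $S$'' puts $S$ itself (with multiplicity) as a component of $\Gamma_p$, but $S$ is a boundary component of $\hat{M}$ and lies outside $\hat{M} \setminus S$; this directly contradicts the conclusion $\Gamma_p \subset \hat{M} \setminus S$ you are trying to establish. The content of the theorem is precisely that no slice survives in the limit, so this step cannot be patched by relabeling: you need an argument, as in the paper, that prevents slices from ever occurring. The paper gets this from the strictly mean-concave barrier combined with the observation that any component in the open cylinder would be a slice and would then contradict the mean-concave foliation, followed by White's maximum principle to push $V_p$ strictly into the interior $\hat{M} \setminus S$.

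A smaller point: the paper's argument also smooths the Lipschitz seam (``resolving singularities around $S$''), so your mollification step is in the same spirit, but you should also note that the index bound used in the free boundary case comes from the framework of \cite{Li-Zhou16, Sun-Wang-Zhou20}, and the compactness passage to the varifold limit uses \cite[Theorem 5]{Ambrozio-Carlotto-Sharp17}; these match the paper's references.
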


\begin{proof}
    The argument for the first part essentially follows the proof of \cite[Theorem 9]{Song18} so we only sketch it here, relying on \cite{Song18} for some details. By varying the metric and resolving singularities around $S$, we form the compact smooth approximations $(U_{\epsilon}, h_{\epsilon})$ of $(\tilde{C}(\hat{M}), h)$. Fix $p \in \mathbb{N}$. Applying the free boundary min-max theorem gives a varifold $V_{\epsilon}$ with $\spt V_{\epsilon} = \Gamma_{\epsilon}$ a $C^{q}$ compact, almost properly embedded free boundary minimal hypersurface such that 
    \[
    \omega_{p}(U_{\epsilon}, h_{\epsilon}) = \mathbf{M}(V_{\epsilon}) =\sum_{i = 1}^{k_{\epsilon}} m_{i, \epsilon} \Area(\Gamma^{(i)}_{\epsilon}). 
    \]
    Here $\Gamma^{(1)}_{\epsilon}, \ldots, \Gamma^{(k_{\epsilon})}_{\epsilon}$ are connected components of $\Gamma_{\epsilon}$ with integer multiplicities $m_{1, \epsilon}, \ldots, m_{k_{\epsilon}, \epsilon}$. By construction, $\partial U_{\epsilon}$ is a disjoint union of connected hypersurfaces $\{S', S_{1}, \ldots, S_{l}\}$. Since $\Phi(\partial{S} \times \{\epsilon\})$ is strictly mean-concave, the monotonicity formula together with the maximum principle implies that $\Gamma_{\epsilon}$ must be compact in $U_{\epsilon} \setminus S'$. 
    
    As $\epsilon \rightarrow 0$, we have $\omega_{p}(U_{\epsilon}, h_{\epsilon}) \rightarrow \omega_{p}(\tilde{C}(\hat{M}), h)$. Then for a sequence $\epsilon_k \rightarrow 0$, the varifold $V_{\epsilon_k}$ converges in the varifold sense to a varifold $V_{p}$ in $\tilde{C}(\hat{M})$ of total mass $\omega_{p}(\tilde{C}(\hat{M}), h)$. By the index bound of Marques-Neves and Sharp's Compactness Theorem, the restriction of $\spt V_p = \Gamma_{p}$ to $\tilde{C}(\hat{M}) \setminus S$ is a $C^{q}$ minimal hypersurface with index at most $p$. The maximum principle implies that if $\Gamma_{p} \cap (\tilde{C}(\hat{M}) \setminus \hat{M}) \neq \emptyset$, $\Gamma_{p}$ would be a connected component of some slice $S \times \{\delta\}$, which contradicts with the strictly mean-concave nature of the foliation. As a consequence, $\Gamma_{p}$ is contained in the compact set $(\hat{M}, g)$. Since $V_{p}$ is a $g$-stationary integral varifold, the maximum principle by White \cite{White10} implies that $V_{p}$ is confined in $\hat{M} \setminus S$. This completes the proof that $\Gamma_{p}$ is a $C^{q}$ compact, almost properly embedded free boundary minimal hypersurface in $\hat{M} \setminus S$ with index at most $p$. 

    When $g$ is strongly bumpy, \cite[Theorem 4.6]{Sun-Wang-Zhou20} implies the existence of a homotopy class $\Pi$ of $p$-sweepouts with $\omega_{p}(U_{\epsilon}, h_{\epsilon}) = \mathbf{L}(\Pi)$. By slightly perturbing the metric $h_{\epsilon}$ to make it strongly bumpy, we obtain from \cite[Theorem 4.7]{Sun-Wang-Zhou20} a minimizing sequence $\{\Phi_{j}\}$ in $\Pi$ such that the critical set $\mathbf{C}(\{\Phi_{j}\})$ contains a $C^{q}$ compact, $2$-sided, multiplicity one, properly embedded free boundary minimal hypersurface $\Gamma_{\epsilon}$. By the compactness theorem \cite[Theorem 5]{Ambrozio-Carlotto-Sharp17}, for a sequence $\epsilon_{k} \rightarrow 0$, $\Gamma_{\epsilon_{k}}$ converges to a $C^{q}$ compact, $2$-sided, multiplicity one, properly embedded free boundary minimal hypersurface $\Gamma_{p}$ because $g$ is strongly bumpy. Since $S$ has no Jacobi field due to bumpiness, all $\Gamma_{\epsilon}$ are contained in $\hat{M}$ and have a uniform positive distance from $S$. By Hausdorff convergence, we conclude that $\Gamma_{p}$ is contained in $\hat{M} \setminus S$.       
\end{proof}

\subsection{Uniform bound of single-cylindrical widths} \label{twopointtwo}

Let $(M^{n + 1}, g)$ be a complete manifold of dimension $3 \leq (n + 1) \leq 7$ with a thin foliation at infinity. There is a proper Morse function $f: M \rightarrow [0, \infty)$ so that 
\[
\lim_{t \rightarrow \infty} \Vol_{n}(f^{-1}(t)) = 0. 
\]
We can assume that positive integers are not critical values of $f$. Then by setting $M_{m} := f^{-1}([0, m])$, we obtain an exhaustion of $M$ by compact domains $M_{1} \subset M_{2} \subset \cdots \subset M$. 

Let $S \subset (M, g)$ be a connected, closed, embedded, $2$-sided minimal hypersurface that is strictly stable. Pick $m_{0} = m_{0}(S)$ large so that $S$ is contained in the interior of $M_{m}$ for all integers $m \geq m_{0}$. We select a connected component of $M_{m} \setminus S$, take the metric completion and obtain a compact Riemannian manifold with boundary $(\hat{M}_{m}, g|_{M_m})$ (the lift of $g|_{M_m}$ to $\hat{M}_{m}$ is still denoted by $g|_{M_m}$). By construction, $\partial \hat{M}_{m}$ is a disjoint union of connected hypersurfaces $\{S_{0} = S, S_{1}, \ldots, S_{l}\}$, where $S$ is a strictly stable minimal hypersurface. The single-cylindrical Weyl law yields that 
\begin{equation}\label{SSWLM}
    p \cdot \Area(S) \leq \tilde{\omega}_{p}(\hat{M}_{m}, g|_{M_m}) \leq p \cdot \Area(S) + C(\hat{M}_{m}, g|_{M_m}) p^{\frac{1}{n + 1}}.
\end{equation}

Note that the constant $C$ in (\ref{SSWLM}) depends on $m$. Our goal is to bound $\tilde{\omega}_{p}(\hat{M}_{m}, g|_{M_m})$ independent of the values of $m$. This becomes essential when we pursue a derivative estimate for the normalized min-max numbers $\tilde{\omega}_{p}(\hat{M}_{m}, g|_{M_m})/p$. To attain the bound, we first refer to a result presented in \cite{Song19}. 

\begin{lem}\label{SAB}
    Let $(M^{n + 1}, g)$ be a complete manifold of dimension $3 \leq (n + 1) \leq 7$ with a thin foliation at infinity. Then $\omega_{p}(M, g)$ is finite for all $p$ and admits a sublinear asymptotic behavior:
    \begin{equation}
        \lim_{p \rightarrow \infty} \frac{\omega_{p}(M, g)}{p} = 0. 
    \end{equation}
    If we write $\omega_{p}(M, g) = f(p)$, then $f(p)$ is finite for all $p$ and $\lim_{p \rightarrow \infty} f(p)/p = 0$. Note that $f(p)$ is locally bounded in the set of $C^{q}$ metrics on $M$ endowed with the strong $C^1$-topology and satisfies $\lim_{p \rightarrow \infty} f(p)/p = 0$. 
\end{lem}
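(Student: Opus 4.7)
The plan is to exploit the thin foliation at infinity to show that for every $\epsilon > 0$, the width $\omega_{p}(M, g)$ is bounded by $o(p) + C\epsilon p$; letting $\epsilon \to 0$ then yields finiteness and sublinear growth. Since positive integers are non-critical values of $f$, I would first pick a threshold $T = T(\epsilon) > 0$ so that $\Vol_n(f^{-1}(t)) < \epsilon$ for all $t \geq T$, and decompose $M$ into a compact core $M_T := f^{-1}([0, T])$ and the thin end $E_T := f^{-1}([T, \infty))$, whose level sets all have $n$-volume at most $\epsilon$.

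For the compact core $M_T$, the Weyl law of Liokumovich--Marques--Neves \cite{Liokumovich-Marques-Neves16} yields $\omega_p(M_T, g) \leq a(n) \Vol(M_T, g)^{n/(n+1)} p^{1/(n+1)} + o(p^{1/(n+1)}) = o(p)$. For the thin end, I would construct $q$-sweepouts directly via level sets: parameterizing $q$-tuples $T \leq s_1 \leq \cdots \leq s_q \leq R$ and sending $(s_1, \ldots, s_q)$ to the cycle $\sum_i f^{-1}(s_i) \pmod{2}$ produces a family whose mass is controlled by $q \epsilon$, and a standard cup-product computation (cf. \cite{Marques-Neves17}) verifies the required cohomological non-triviality. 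Combining via the subadditivity $\omega_{p_0 + p_1}(A \cup B) \leq \omega_{p_0}(A) + \omega_{p_1}(B)$ for pieces with disjoint interiors (applied with $p_0 = p_1 = p$) gives $\omega_p(f^{-1}([0, R]), g) \leq o(p) + C \epsilon p$ uniformly in $R$. Since $\omega_p(M, g)$ is the limit of these quantities along the compact exhaustion, the same bound passes to $M$. Dividing by $p$, sending $p \to \infty$, and then $\epsilon \to 0$ yields both finiteness and $\omega_p(M, g)/p \to 0$.

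For the local boundedness in the strong $C^1$-topology, I would observe that all quantities appearing in the bound, namely $\Vol(M_T, g)$ and $\sup_{t \geq T} \Vol_n(f^{-1}(t), g)$, vary continuously under strong $C^1$-perturbations and can be controlled uniformly on a small strong $C^1$-neighborhood of any fixed $g_0$, since the strong topology permits prescribing the perturbation size $\delta_k$ on each $f^{-1}([k, k+1])$ independently. The main obstacle I anticipate is verifying rigorously that the $q$-tuple family $(s_1, \ldots, s_q) \mapsto \sum_i f^{-1}(s_i)$ is a genuine $q$-sweepout: the parameter domain must be set up with care (for instance, an $n$-simplex in $[T, R]^q$ with appropriate boundary identifications producing a copy of $\mathbb{RP}^q$) so that its $\bar{\lambda}^q$-pullback is non-zero. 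This is essentially the same ``sweepout by parallel slices'' construction used in the proof of the Weyl law for closed manifolds and should be tractable after technical adaptation to the compact manifold-with-boundary $f^{-1}([T, R])$.
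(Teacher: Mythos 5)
Your approach matches the paper's in all essentials: decompose $M$ into a compact core where the Liokumovich--Marques--Neves Weyl law controls $\omega_p$ by $O(p^{1/(n+1)})$, and a thin end $f^{-1}([T,R])$ where the sweepout built from level sets of the Morse function gives mass $O(p\epsilon)$, then glue the two and send $\epsilon\to 0$. The paper uses exactly this decomposition (with $K_\mu$ playing the role of your $M_T$), obtains the thin-end sweepout by citing Claim 5.6 of Marques--Neves' Theorem 5.1 (the polynomial parametrization $[a_0:\cdots:a_p]\mapsto\partial\{a_0+a_1\theta+\cdots+a_p\theta^p<0\}$, equivalent to your unordered $q$-tuple of level-set parameters), and combines via the Liokumovich--Marques--Neves gluing technique.

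Two technical points are stated imprecisely in your write-up, though neither undermines the sublinear conclusion. First, the ``subadditivity'' $\omega_{p_0+p_1}(A\cup B)\le\omega_{p_0}(A)+\omega_{p_1}(B)$ is not what the LMN gluing actually gives: the gluing takes a $p$-sweepout of $A$ and a $p$-sweepout of $B$ (with no concentration of mass) and produces a \emph{$p$-sweepout} of $A\cup B$ whose mass is at most the sum plus an interface term (the $n$-volume of $\partial K_\mu$ plus the tiny connecting tubes), so the index does not add. Consequently your application ``with $p_0=p_1=p$'' should yield $\omega_p$, not $\omega_{2p}$, on the left — which is actually what you wrote on the right-hand side, so the index mismatch in your displayed inequality is an internal inconsistency rather than an error in the eventual bound. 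Second, you drop the interface area from the gluing error; the paper keeps it as $+\Area(\partial K_\mu)$, which is $\le\mu$ precisely because the interface is itself a thin level set, so it is absorbed into $\hat C$. Your discussion of local boundedness in the strong $C^1$-topology is a welcome addition, since the paper asserts this without argument.
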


\begin{proof}
    Since $(M, g) \in \mathcal{F}_{\thin}$, for any $\mu > 0$, there is a compact subset $K_{\mu} \subset M$ so that any bounded domain of $M \setminus K_{\mu}$ has a thin foliation $\{\Sigma_{t}\}_{t \in [0, 1]}$ such that 
    \[
    \sup_{t \in [0, 1]} \Vol_{n}(\Sigma_{t}) \leq \mu. 
    \]
    We need to show that $\omega_{p}(M, g)$ is finite and asymptotically sublinear. 
    
    Let $D_{\mu}$ be a compact region of $M$ containing $K_{\mu}$. Then the region $D_{\mu} \setminus K_{\mu}$ has a $1$-sweepout $\{\Sigma_{t}\}_{t \in [0, 1]}$ such that 
    \[
    \sup_{t \in [0, 1]} \Vol_{n}(\Sigma_{t}) \leq \mu. 
    \]  
    By the proof of Claim 5.6 of \cite[Theorem 5.1]{Marques-Neves17}, for each $p \geq 1$ we can construct from $\{\Sigma_{t}\}$ a $p$-sweepout $\Psi_{p}: \mathbb{RP}^{p} \rightarrow \mathcal{Z}_{n, \rel}(D_{\mu} \setminus K_{\mu}, \partial (D_{\mu} \setminus K_{\mu}); \mathbb{Z}_{2})$ which has no concentration of mass such that
    \[
    \sup_{x \in \mathbb{RP}^{p}} \mathbf{M}(\Psi_{p}(x)) \leq p \mu. 
    \]
    By the sublinear bound on the $p$-widths \cite{Marques-Neves17} which holds for compact manifolds with boundary, there is a $p$-sweepout $\Phi_{p}: \mathbb{RP}^{p} \rightarrow \mathcal{Z}_{n, \rel}(K_{\mu}, \partial K_{\mu}; \mathbb{Z}_{2})$ which has no concentration of mass and there is a constant $C = C(K_{\mu}, g)$ independent of $p$ such that 
    \[
    \sup_{x \in \mathbb{RP}^{p}} \mathbf{M}(\Phi_{p}(x)) \leq Cp^{\frac{1}{n + 1}}. 
    \]

    Using the gluing technique of Liokumovich-Marques-Neves \cite{Liokumovich-Marques-Neves16}, we add tiny tubes to connect the regions and glue the $p$-sweepouts $\Phi_{p}$ and $\Psi_{p}$ together to obtain a $p$-sweepout $\hat{\Phi}_{p}: \mathbb{RP}^{p} \rightarrow \mathcal{Z}_{n}(D_{\mu}; \partial D_{\mu}; \mathbb{Z}_{2})$ satisfying 
    \begin{align*}
        \max_{x \in \mathbb{RP}^{p}} \mathbf{M}(\hat{\Phi}_{p}(x)) &\leq \max_{x \in \mathbb{RP}^{p}} \mathbf{M}(\Phi_{p}(x)) +  
        \max_{x \in \mathbb{RP}^{p}} \mathbf{M}(\Psi_{p}(x)) + \Area(\partial K_{\mu})\\ 
        &\leq C p^{\frac{1}{n + 1}} +  p\mu + \mu\\
        &\leq \hat{C} p^{\frac{1}{n + 1}} + p\mu,
    \end{align*}
    where $\hat{C} = C + \mu$. Taking $D_{\mu}$ arbitrarily large, we get 
    \[
    \omega_{p}(M, g) \leq \hat{C} p^{\frac{1}{n + 1}} +  p\mu,
    \]
    which shows that $\omega_{p}(M, g)$ is finite for all $p$. Since $\mu$ is arbitrarily small, we conclude that 
    \[
    \lim_{p \rightarrow \infty} \frac{\omega_{p}(M, g)}{p} = 0,
    \]
    i.e. $\omega_{p}(M, g)$ is asymptotically sublinear. 
\end{proof}

Since the proof of Theorem \ref{DT} requires a local perturbation of a family of metrics on $M_m$, we need the following setup in addition to Lemma \ref{SAB}. Let $(M, g), S$, and $M_m$ be as above. Assume that each $M_{m}$ admits a $C^{q}$ metric $g_{m}$ with $q \geq 3$ so that $g_{m}$ converges smoothly and locally uniformly to $g$. Then for $m \geq m_{0}$ with the constant $m_{0}$ enlarged, the metric $g_{m}$ lies in a small $C^{q}$-neighborhood of $g|_{M_m}$. By isometrically embedding $M_m$ into a closed manifold $\tilde{M}_{m}$ and applying the Implicit Function Theorem on $\tilde{M}_{m}$, there exists a unique minimal hypersurface $S_{g_{m}}$ in the interior of $M_m$ as a section of the normal bundle of $S$. Moreover, $S_{g_{m}}$ is $2$-sided and strictly stable. By selecting a connected component of $M_{m} \setminus S_{g_m}$ and taking the metric completion, we obtain a compact Riemannian manifold with boundary $(\hat{M}_{m, g_{m}}, g_{m})$ (the lift of $g_m$ to $\hat{M}_{m, g_m}$ is still denoted by $g_m$). Our goal is to bound $\tilde{\omega}_{p}(\hat{M}_{m, g_{m}}, g_{m})$ independent of the values of $m$. 

\begin{thm}\label{ALUB}
    Consider the setup above. The single-cylindrical $p$-width admits the following bound for $m \geq \max\{m_{0}, m_{1}\}$:
        \begin{equation}\label{fubsc}
        p \cdot \Area_{g_m}(S_{g_m}) \leq \tilde{\omega}_{p}(\hat{M}_{m, g_{m}}, g_{m}) \leq p \cdot \Area_{g_{m}}(S_{g_m}) +  \Area_{g_m}(S_{g_m}) + f(p) + 2.
        \end{equation}
    Moreover, for any $\epsilon > 0$, we have the uniform bound for $m \geq \max\{m_{0}, m_{1}, m_{2}\}$: 
        \begin{equation}\label{subsc}
        \Area_{g}(S) - \epsilon \leq \frac{1}{p}\tilde{\omega}_{p}(\hat{M}_{m, g_{m}}, g_{m}) \leq  \Area_{g}(S) +  \epsilon + \frac{\Area_{g}(S) + \epsilon + f(p) + 2}{p}.
        \end{equation}
    In both statements, $f(p)$ is locally bounded in the set of $C^{q}$ metrics on $M$ endowed with the strong $C^1$-topology and satisfies $\lim_{p \rightarrow \infty} f(p)/p = 0$. 
\end{thm}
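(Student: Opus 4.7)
The lower bound in \eqref{fubsc} is immediate: applying the Single-Cylindrical Weyl Law (Theorem \ref{WLSC}) to $(\hat{M}_m, \hat{g}_m)$, whose boundary component $S$ is strictly stable by hypothesis, yields $p \cdot \Area_{\hat{g}_m}(S) \leq \tilde{\omega}_p(\hat{M}_m, \hat{g}_m)$ at once.

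The substantive part is the upper bound. The difficulty is that Theorem \ref{WLSC} only supplies a remainder $C(\hat{M}_m, \hat{g}_m)\, p^{1/(n+1)}$ whose constant depends on $m$; we must replace it by the $m$-uniform quantity $\Area_{\hat{g}_m}(S) + f(p) + 2$. The plan is to exhibit an explicit $p$-sweepout of $(\tilde{\mathcal{C}}(\hat{M}_m), h)$ built from two ingredients and then glue them. First, the product structure of the cylindrical end $S \times [0, T]$ for $T$ large furnishes the polynomial-based $p$-sweepout in the spirit of the cylindrical Weyl law argument in \cite{Song18}: assign to each class $[P] \in \mathbb{RP}^p$ of polynomials of degree at most $p$ the mod-$2$ sum of slices $S \times \{t_j\}$ over the real roots $t_j$ of $P$ lying in $[0, T]$. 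Its max mass is bounded by $p \cdot \Area_{\hat{g}_m}(S) + \Area_{\hat{g}_m}(S) + 1$, where the extra $\Area_{\hat{g}_m}(S) + 1$ absorbs contributions from root-crossings through the two endpoints of $[0, T]$. Second, by Lemma \ref{SAB} there is a $p$-sweepout of $(M, g)$ of max mass $\leq f(p) + 1$, whose image, being compact, lies in some $M_{m_1}$; for $m \geq \max\{m_0, m_1\}$ the smooth locally uniform convergence $\hat{g}_m \to g$ ensures that the $\hat{g}_m$-mass of these cycles is still bounded by $f(p) + 1$. A Liokumovich--Marques--Neves tube construction \cite{Liokumovich-Marques-Neves16} glues the two into a single $p$-sweepout of $\tilde{\mathcal{C}}(\hat{M}_m)$ whose tube contribution is absorbed into the constants, producing the desired total mass bound and hence \eqref{fubsc}.

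For the uniform estimate \eqref{subsc}, divide \eqref{fubsc} by $p$: the dominant term $\Area_{\hat{g}_m}(S)$ is within $\epsilon$ of $\Area_g(S)$ for $m \geq m_2$ by the smooth locally uniform convergence, and the sublinear remainder is controlled by the strong $C^1$-local boundedness of $f(p)$ asserted in Lemma \ref{SAB}. The main obstacle is the gluing step: one must check that merging a sweepout supported in the cylindrical end with one coming from cycles of $M$ (which may cross $S$ and are not intrinsically cycles of $\tilde{\mathcal{C}}(\hat{M}_m)$) yields a genuine $p$-sweepout detecting $\bar{\lambda}^p$, and that any extra mass created both by restricting ambient cycles to the chosen component $\hat{M}_m$ and by the connecting tubes stays within the allotted $+2$ budget.
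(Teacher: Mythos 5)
Your overall strategy is the right one and coincides with the paper's: a polynomial sweepout of the cylindrical end $S\times[0,L]$ with max mass $\lesssim p\cdot\Area_{\hat g_m}(S)$, a sweepout of the compact piece with sublinear max mass, glued by the Liokumovich--Marques--Neves tube construction, then divide by $p$ and use $\Area_{\hat g_m}(S)\to\Area_g(S)$. The derivation of \eqref{subsc} from \eqref{fubsc} is identical.

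Two points of divergence, one of them a genuine gap. For the lower bound you cite Theorem \ref{WLSC}; the paper instead reproves it via a Lusternik--Schnirelmann argument (this is not strictly necessary given Theorem~\ref{WLSC}, but since that theorem's proof is skipped, the paper's route is more self-contained). For the upper bound, the paper first deduces $\omega_{p}(\hat M_m,\hat g_m)\le\omega_p(M,g)+1=f(p)+1$ from inclusion monotonicity of widths plus the approximation $\hat g_m\to g$, and then, \emph{by the definition of width}, selects a $p$-sweepout of the relative-cycle space $\mathcal{Z}_{n,\rel}(\hat M_m,\partial\hat M_m;\mathbb{Z}_2)$ with max mass $\le f(p)+2$. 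You instead take a $p$-sweepout of $(M,g)$ of max mass $\le f(p)+1$, observe its image is compactly supported, and propose to \emph{restrict} it to $\hat M_m$. You correctly flag the resulting obstacle — that the restricted family must still detect $\bar\lambda^p$ in $\mathcal{Z}_{n,\rel}(\hat M_m,\partial\hat M_m;\mathbb{Z}_2)$, and that restricting an ambient cycle across $\partial\hat M_m$ may change masses — but you leave it unresolved. This is the real gap in the proposal; the paper sidesteps it by invoking the width bound on $\hat M_m$ directly rather than pushing forward a sweepout of $M$.

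A smaller discrepancy is in your accounting of the extra $\Area_{\hat g_m}(S)+1$. You attribute it to "root-crossings through the two endpoints of $[0,T]$" in the polynomial sweepout; in the paper the polynomial sweepout $\Psi_p$ on $S\times[0,L]$ has max mass exactly $\le p\cdot\Area_{\hat g_m}(S)$ (a degree-$p$ polynomial has at most $p$ sign-changing roots, and endpoint slices land in the relative boundary), while the extra $\Area_{\hat g_m}(S)$ comes from the interface contribution in the LMN gluing step, not from the polynomial. Both accountings arrive at the same total $p\cdot\Area_{\hat g_m}(S)+\Area_{\hat g_m}(S)+f(p)+2$, but the reason for the extra $\Area_{\hat g_m}(S)$ in your budget is not the correct one, and that misattribution is exactly what leaves no room for the interface term you would actually need.
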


\begin{proof}
    Consider (\ref{fubsc}) first. As proven in \cite[Theorem 9]{Song18}, we have $\omega_{1}(S_{g_m} \times [0, L], h) = \Area_{g_{m}}(S_{g_m})$ when $L$ is large enough, where $h$ satisfies $h|_{\hat{M}_{m, g_{m}}} = g_m$ and $h|_{S_{g_m} \times [0, \infty)} = g_{m}|_{S_{g_m}} \oplus dt^2$. The lower bound is obtained by using a Lusternik-Schnirelmann type argument. For all $R > 0$ large enough and $x \in \tilde{\mathcal{C}}(\hat{M}_{m, g_{m}})$, we have $S_{g_m} \times [0, L] \subset B_{R}(x)$ such that   
    \[
    \tilde{\omega}_{1}(\hat{M}_{m, g_{m}}, g_{m}) \geq \omega_{1}(B_{R}(x), h) \geq \omega_{1}(S_{g_m} \times [0, L], h) = \Area_{g_{m}}(S_{g_m}).  
    \] 
    Now, fix $x_{0} \in \tilde{\mathcal{C}}(\hat{M}_{m, g_m})$. Given $\delta > 0$, choose $R_{p}$ large enough such that 
    \[
    \omega_{p}(B_{R_{p}}(x_{0}), h) \geq \tilde{\omega}_{p}(\hat{M}_{m, g_m}, g_{m}) - \delta. 
    \]
    Due to the fact that $B_{R_{p}}(x_{0}) \sqcup (S_{g_m} \times [0, L]) \subset \tilde{\mathcal{C}}(\hat{M}_{m, g_m})$, the Lusternik-Schnirelmann inequality gives 
    \[
    \tilde{\omega}_{p + 1}(\hat{M}_{m, g_m}, g_{m}) \geq \omega_{p}(B_{R_{p}}(x_{0}), h) + \omega_{1} (S_{g_m} \times [0, L], h) \geq \tilde{\omega}_{p}(\hat{M}_{m, g_m}, g_{m}) + \Area_{g_m}(S_{g_m}) - \delta.  
    \]
    Since $\delta$ is arbitrary, we prove the lower bound.  

    To attain the upper bound, we first note that $\hat{M}_{m, g_m} \subset M$ is a compact domain of $M$, and hence by Lemma \ref{SAB} and the fact that $g_{m} \rightarrow g$ smoothly and locally uniformly, the following inequality holds for each $p \in \mathbb{N}$ and all $m \geq m_{1}$:  
    \[
    \omega_{p}(\hat{M}_{m, g_m}, g_{m}) \leq \omega_{p}(M, g) + 1 = f(p) + 1.  
    \]
    
    Let $\theta: S_{g_m} \times [0, L] \rightarrow \mathbb{R}$ be the Morse function defined by $\theta(x, t) := t$. Consider $\Psi_{p}: \mathbb{RP}^{p} \rightarrow \mathcal{Z}_{n}(S_{g_m} \times [0, L]; \partial (S_{g_m} \times [0, L]); \mathbb{Z}_{2})$ defined by
    \[
    \Psi_{p}([a_{0}, a_{1}, \ldots, a_{p}]) = \partial\{x: a_{0} + a_{1} \theta(x) + \cdots + a_{p} \theta(x)^{p} < 0\}. 
    \]
    Then $\Psi_{p}$ is a $p$-sweepout which has no concentration of mass and satisfies
    \[
    \max_{x \in \mathbb{RP}^{p}} \mathbf{M}(\Psi_{p}(x)) \leq p \cdot \Area_{g_m}(S_{g_m}). 
    \] 
    Since $\omega_{p}(\hat{M}_{m, g_m}, g_{m}) \leq f(p) + 1$, there is a $p$-sweepout $\Phi_{p}: \mathbb{RP}^{p} \rightarrow \mathcal{Z}_{n, \rel}(\hat{M}_{m, g_m}, \partial \hat{M}_{m, g_m}; \mathbb{Z}_{2})$ which has no concentration of mass such that
    \[
    \sup_{x \in \mathbb{RP}^{p}} \mathbf{M}(\Phi_{p}(x)) \leq f(p) + 2.
    \]
    
    Using the gluing technique of Liokumovich-Marques-Neves \cite{Liokumovich-Marques-Neves16}, we add tiny tubes to connect the regions and glue the $p$-sweepouts $\Phi_{p}$ and $\Psi_{p}$ together to obtain a $p$-sweepout $\hat{\Phi}_{p}: \mathbb{RP}^{p} \rightarrow \mathcal{Z}_{n}(\hat{M}_{m, g_m} \cup (S_{g_m} \times [0, L]); \partial (\hat{M}_{m, g_m} \cup (S_{g_m} \times [0, L])); \mathbb{Z}_{2})$ satisfying 
    \begin{align*}
        \max_{x \in \mathbb{RP}^{p}} \mathbf{M}(\hat{\Phi}_{p}(x)) &\leq \max_{x \in \mathbb{RP}^{p}} \mathbf{M}(\Phi_{p}(x)) +  
        \max_{x \in \mathbb{RP}^{p}} \mathbf{M}(\Psi_{p}(x)) + \Area_{g_m}(S_{g_m})\\ 
        &\leq f(p) + 2 +  p \cdot \Area_{g_m}(S_{g_m}) + \Area_{g_m}(S_{g_m}).
    \end{align*}
    Letting $L \rightarrow \infty$, we get 
    \[
    \tilde{\omega}_{p}(\hat{M}_{m, g_m}, g_{m}) \leq p \cdot \Area_{g_m}(S_{g_m}) + \Area_{g_m}(S_{g_m}) + f(p) + 2,
    \]
    which completes the proof of (\ref{fubsc}). 
    
    To obtain (\ref{subsc}), we note that for any $\epsilon > 0$, there is a large number $m_{2}$ such that for all integers $m \geq m_{2}$, 
    \[
    |\Area_{g_{m}}(S_{g_m}) - \Area_{g}(S)| \leq \epsilon. 
    \]
    Substituting this into (\ref{fubsc}) and dividing both sides by $p$ yield (\ref{subsc}).  
\end{proof}

\subsection{Single-cylindrical widths are locally Lipschitz functions of the metric} \label{twopointthree} 

The following lemma indicates that for fixed $m \geq m_0$ the normalized single-cylindrical widths $\tilde{\omega}_{p}(\hat{M}_{m, g_{m}}, g_{m})$ are Lipschitz functions of the metric $g_{m}$ within a small $C^{q}$-neighborhood of $g|_{M_m}$. 

\begin{lem}\label{scwll}
    Fix a positive integer $q \geq 3$ and let $g \in \mathcal{F}_{\thin} \cap \Int(\mathcal{T}_{\infty})$ be a $C^{q}$ metric on $M$. Assume that $S \subset (M, g)$ is a connected, closed, embedded, 2-sided minimal hypersurface that is strictly stable. Then there exists $\epsilon > 0$ and $C = C(M_{m}, g, S, q) > 0$ such that for any $C^{q}$ metrics $g_{m, 1}$ and $g_{m, 2}$ on $M_{m}$ with $||g_{m, i} - g|_{M_m}||_{C^{q}} < \epsilon$ for $i = 1, 2$, we have 
    \[
    \left| \frac{1}{p} \tilde{\omega}_{p}(\hat{M}_{m, g_{m, 1}}, g_{m, 1}) - \frac{1}{p} \tilde{\omega}_{p}(\hat{M}_{m, g_{m, 2}}, g_{m, 2}) 
    \right| \leq C ||g_{m, 1} - g_{m, 2}||_{C^{q}}, 
    \]
    where the lifts of $g_{m, 1}, g_{m, 2}$ to $\hat{M}_{m, g_{m, 1}}, \hat{M}_{m, g_{m, 2}}$ are still denoted by $g_{m, 1}, g_{m, 2}$ respectively.   
\end{lem}

\begin{proof}
    It is essentially a computation appearing in the proof of \cite[Lemma 1.4]{Song-Zhou21} and \cite[Lemma 1]{Marques-Neves-Song19}, which is skipped here.
\end{proof}

We end this section with a formula for the derivative of the single-cylindrical width whose proof is similar to that of \cite[Lemma 2]{Marques-Neves-Song19}.  

\begin{lem}\label{DSCW}
    Fix $m \geq m_0$ and a positive integer $q \geq 3$. Let $g \in \mathcal{F}_{\thin} \cap \Int(\mathcal{T}_{\infty})$ be a $C^{q}$ metric on $M$. Assume that $S \subset (M, g)$ is a connected, closed, embedded, 2-sided minimal hypersurface that is strictly stable. Let $\{g_{t}\}_{t \in [0, 1]}$ be a smooth family of $C^{q}$ metrics with $g_{0} = g$. Let $\{g_{m, t}\}_{t \in [0, 1]}$ be a smooth family of $C^{q}$ metrics on $M_{m}$ with $g_{m, 0} = g_{m}$ such that $g_{m, t}$ converges smoothly and locally uniformly to $g_{t}$ for each fixed $t$. For $t$ small, consider $(\hat{M}_{m, g_{m, t}}, g_{m, t})$ and $S_{g_{m, t}}$ constructed in Section \ref{twopointtwo} whenever $m \geq m_{0}$. Assume that 
    \begin{enumerate}[label=\emph{(\roman*)}]
        \item every almost properly embedded free boundary minimal hypersurface in $(\hat{M}_{m, g_{m}}, g_{m})$ is non-degenerate,  
        \item for any connected, $1$-sided, almost properly embedded free boundary minimal hypersurface in $(\hat{M}_{m, g_{m}}, g_{m})$, its $2$-sided double cover has no positive Jacobi field.
    \end{enumerate}
    Suppose also that, for some integer $p$, the single-cylindrical $p$-width function 
    \[
    t \rightarrow \tilde{\omega}_{p}(\hat{M}_{m, g_{m, t}}, g_{m, t})
    \]
    is differentiable at time $t = 0$ for all $m \geq m_{0}$. 

    Then for each $m \geq m_{0}$ there exists a $C^{q}$ compact, almost properly embedded free boundary minimal hypersurface $\Gamma_{m, p}$ contained in $\hat{M}_{m, g_{m}} \setminus S_{g_{m}}$ such that 
    \begin{align*}
        &\tilde{\omega}_{p}(\hat{M}_{m, g_{m}}, g_{m}) = \Area_{g_{m}}(\Gamma_{m, p}), \hspace{10pt} \Index(\Gamma_{m, p}) \leq p, \\
        &\text{ and } \frac{d}{dt}\bigg|_{t = 0} \tilde{\omega}_{p}(\hat{M}_{m, g_{m, t}}, g_{m, t}) = \int_{\Gamma_{m, p}} \frac{1}{2} \Tr_{\Gamma_{m, p}, g_{m}} (\frac{\partial g_{m, t}}{\partial t}\bigg|_{t = 0}) d\Gamma_{m, p}. 
    \end{align*}
\end{lem}

\begin{proof}
    Fix $m \geq m_{0}$. Take a sequence $\{t_{j}\}_{j \in \mathbb{N}} \subset [0, 1]$ with $t_{j} \rightarrow 0$. For each $j$, we find a smooth metric $h_{m, j}$ on $M_{m}$ arbitrarily close to $g_{m, t_j}$ in the $C^{q}$-topology so that $h_{m, j}$ (the lift of $h_{m, j}$ to $\hat{M}_{m, h_{m, j}}$ is still denoted by $h_{m, j}$) is strongly bumpy with 
    \begin{align*}
        &\frac{d}{dt}\bigg|_{t = 0} \tilde{\omega}_{p}(\hat{M}_{m, g_{m, t}}, g_{m, t}) 
        = \lim_{j \rightarrow \infty} \frac{\tilde{\omega}_{p}(\hat{M}_{m, h_{m, j}}, h_{m, j}) - \tilde{\omega}_{p}(\hat{M}_{m, g_{m}}, g_m)}{t_{j}}\\
        &\text{ and } \frac{\partial g_{m, t}}{\partial t}\bigg|_{t = 0} = \lim_{j \rightarrow \infty} \frac{h_{m, j} - g_{m}}{t_{j}}. 
    \end{align*}
    By Theorem \ref{CMM}, for each $j$ there is a $2$-sided, compact, properly embedded free boundary minimal hypersurface $\Gamma'(m, j) \subset \hat{M}_{m, h_{m, j}} \setminus S_{h_{m, j}}$ such that 
    \[
    \tilde{\omega}_{p}(\hat{M}_{m, h_{m, j}}, h_{m, j}) = \Area_{h_{m, j}}(\Gamma'(m, j)), \hspace{10pt} \Index(\Gamma'(m, j)) \leq p. 
    \]
    By compactness theorem \cite[Theorem 5]{Ambrozio-Carlotto-Sharp17}, after taking a subsequence as $j \rightarrow \infty$, $\Gamma'(m, j)$ converges in the varifold sense to a compact, almost properly embedded free boundary minimal hypersurface $\Gamma_{m, p}$ in $(\hat{M}_{m, g_{m}}, g_m)$. If the convergence was not multiplicity one, then either there is a $2$-sided component of $\Gamma_{m, p}$ with a nontrivial Jacobi field or a $1$-sided component of $\Gamma_{m, p}$ whose $2$-sided double cover has a positive Jacobi field. By our assumptions on $g_{m}$, both cases lead to a contradiction. Hence, $\Gamma_{m, p}$ is multiplicity one. Since $\Gamma_{m, p}$ may not be properly embedded, the convergence is locally smooth and graphical outside a finite set. Since $S_{g_m}$ is strictly stable, by Hausdorff convergence, we have $\Gamma_{m, p} \subset \hat{M}_{m, g_{m}} \setminus S_{g_{m}}$ with
    \[
    \tilde{\omega}_{p}(\hat{M}_{m, g_{m}}, g_m) = \Area_{g_m}(\Gamma_{m, p}), \hspace{10pt} \Index(\Gamma_{m, p}) \leq p.
    \]
    By assumption on $g_{m}$, the formula for the derivative follows as in \cite[Lemma 2]{Marques-Neves-Song19}.   
\end{proof}

\subsection{A quantitative constancy theorem} \label{twopointfour} 

In this section, we generalize the quantitative constancy theorem \cite[Theorem 2.1]{Song-Zhou21} to complete manifolds. Recall that in closed case, the quantitative constancy theorem suggests that if the total mass of a stationary $n$-varifold is mostly concentrated in a tubular neighborhood of a closed embedded hypersurface, then the varifold distance between the normalized varifold and the hypersurface has a quantitative estimate. 

Let $(M, g)$ be a complete Riemannian manifold, $g$ a $C^{q}$ metric ($q \geq 3$), and $S \subset M$ a connected, closed, embedded, $2$-sided hypersurface. For $\epsilon > 0$ small enough, let $N_{\epsilon}(S)$ be an $\epsilon$-tubular neighborhood of $S$ so that the normal exponential map of $S$ has no focal point in $N_{\epsilon}(S)$. After choosing a unit normal vector field $\nu $ along $S$, we consider a foliation $\{S_{t}\}_{-\epsilon < t < \epsilon}$ of $N_{\epsilon}(S)$ by a family of equidistant hypersurfaces: 
\[
S_{t} := \{\exp_{p}(t\nu(p)): p \in S\}, 
\]
where $t: N_{\epsilon}(S) \rightarrow (-\epsilon, \epsilon)$ is the signed distance to $S$. Denote the nearest point projection map by 
\[
\pi: N_{\epsilon}(S) \rightarrow S. 
\]
For $x \in N_{\epsilon}(S)$, let $S_{x}$ be the leaf in $\{S_{t}\}$ containing $x$.  

Let $G_{n}(U)$ be the $G(n + 1, n)$-Grassmanian bundle of unoriented $n$-planes over a subset $U \subset M$. Given an $n$-varifold $V$ on $U$ (i.e. a Radon measure on $G_{n}(U)$), we denote by $||V||$ its mass, $||V||(U)$ the mass of the restriction of $V$ to $U$, and $\mu_{V}$ the Radon measure on $M$ associated with $V$. In \cite{Pitts81}, the $\mathbf{F}$-distance between two $n$-varifolds $V, W$ in $M \hookrightarrow \mathbb{R}^{P}$ is defined extrinsically as
\[
\mathbf{F}(V, W) := \sup\{|V(f) - W(f)|; f: G_{n}(\mathbb{R}^{P}) \rightarrow \mathbb{R}, |f| \leq 1, \Lip(f) \leq 1\}. 
\]
Using the distance function $\dist_{g}$ on $G(n + 1, n)$ induced from the unit tangent bundle $UTM$ of $M$, we can intrinsically describe the varifold distance, which aligns with Pitts' definition up to a constant. 

\begin{thm}\label{qctc}
    Let $(M, g)$, $S$, $\{S_{t}\}$, $N_{\epsilon}(S)$ be as above. Then there exists a constant $C = C(g, S)$ such that for any $0 < \delta < \epsilon$, and for any stationary $n$-varifold $V$ in $(M, g)$ satisfying 
    \begin{equation}\label{qct}
    \frac{||V||(M \setminus N_{\delta}(S))}{||V||} \leq \delta, 
    \end{equation}
    we have 
    \[
    \mathbf{F}\left(\frac{[S]}{||S||}, \frac{[V]}{||V||}\right) \leq C \sqrt{\delta}. 
    \]
    The constant $C = C(g, S)$ can be chosen to be uniformly bounded in the set of pairs $(g, S)$ endowed with the product $C^{3}$-topology.  
\end{thm}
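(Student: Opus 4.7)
The plan is to mimic the proof of the compact quantitative constancy theorem \cite[Theorem 2.1]{Song-Zhou21} on the tubular neighborhood $N_{\epsilon}(S)$, treating the mass of $V$ outside $N_{\delta}(S)$ as a small error. Since $S$ is compact and $||V||(M \setminus N_{\delta}(S)) \leq \delta \, ||V||$ by hypothesis, the exterior part contributes only $O(\delta)$ to the normalized $\mathbf{F}$-distance; the non-compactness of $M$ therefore plays no essential role, as all relevant analysis happens in the relatively compact set $N_{\epsilon}(S)$. In particular I would first split $V = V\llcorner N_{\delta}(S) + V\llcorner (M\setminus N_{\delta}(S))$ and use the trivial bound $\mathbf{F}([V]/||V||,[V\llcorner N_{\delta}(S)]/||V||)\leq 2\delta$ to absorb the exterior contribution into the final $C\sqrt{\delta}$.

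Working inside $N_{\epsilon}(S)$ with the signed distance $t$ to $S$ and the nearest-point projection $\pi$, the goal is twofold: compare the projected mass measure $\pi_{*}\mu_{V}/||V||$ to the uniform measure $\mu_{S}/||S||$, and show that $||V||$-almost every tangent plane of $V\llcorner N_{\delta}(S)$ is close to $T_{\pi(x)}S$. I would apply the first variation identity $\int \divergence_{V} X \, dV = 0$ to two families of compactly supported test vector fields: normal fields $X = \phi(\pi(x))\,\nabla t$ for $\phi \in C^{1}(S)$, whose divergence along a plane $T$ contains the mean curvature of the leaf $S_{t}$, which is $O(|t|)$ on $\spt V \cap N_{\delta}(S)$ because $S$ is minimal; and tangential fields $X = Y(\pi(x))^{\top}$ extending a vector field $Y$ on $S$ via the horizontal distribution. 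Since $\spt V\cap N_{\epsilon}(S) \subset \{|t|\leq\delta\}$, these identities produce an approximate constancy statement of the form
\[
\left| \int_{S}\phi\,d(\pi_{*}\mu_{V}) - \lambda\int_{S}\phi\,d\mu_{S} \right| \leq C\sqrt{\delta}\,||V||\,||\phi||_{C^{1}(S)},
\]
for $\lambda = ||V||/||S||$, modulo a tilt term coming from the discrepancy between $T_{x}V$ and $T_{\pi(x)}S$.

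The tangent plane comparison is handled by a tilt-excess bound $\int |T - T_{\pi(x)}S|^{2}\,dV(x,T) \leq C\delta\,||V||$, obtained by applying first variation against a field of the form $t(x)\,\nabla t(x)$; the Euclidean computation is standard (it underlies Allard's regularity and the usual proof of constancy), and the minimality of $S$ together with $|t|\leq\delta$ on $\spt V \cap N_{\delta}(S)$ produces the $\delta$ factor on the right. Cauchy--Schwarz then converts this $L^{2}$ control into an $L^{1}$ tilt bound of size $\sqrt{\delta}\,||V||$, which both closes the loop in the density comparison above and directly feeds the tangential piece of the $\mathbf{F}$-distance, accounting for the $\sqrt{\delta}$ rate.

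The main obstacle I expect is precisely this interplay: each first-variation identity only delivers $L^{1}$ bounds, and the $\sqrt{\delta}$ exponent is forced by the Cauchy--Schwarz step, so the argument needs the tilt-excess estimate in the right form to make everything fit together and to prevent loss of the rate. Uniformity of $C = C(g,S)$ under $C^{3}$-perturbations of $(g,S)$ follows because every ingredient — the second fundamental form of the leaves $\{S_{t}\}$, the Jacobian of the normal exponential map, and pointwise bounds on $|\nabla t|$ and $|\nabla^{2} t|$ on the compact set $\overline{N_{\epsilon}(S)}$ — depends continuously on $(g,S)$ in $C^{3}$, so standard compactness supplies the uniform constant.
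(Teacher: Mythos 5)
Your proposal follows essentially the same route as the paper's argument: normalize $\|V\|$, extend the signed distance $t$ off $N_\epsilon(S)$, plug $X = t\nabla t$ into the first variation formula to get the $L^2$ tilt bound $\int_{G_n(N_\delta(S))}(1-(\nu\cdot\nu_P)^2)\,dV \lesssim \delta$, pass to an $L^1$ tilt bound of size $\sqrt\delta$ by Cauchy--Schwarz, and then compare $\pi_*\mu_V$ to $\mu_S$ by testing the first variation against a gradient field pulled back from $S$. The one thing you leave implicit is the concrete device that turns ``test against tangential fields'' into a bound for \emph{arbitrary} Lipschitz densities: the paper solves the Poisson equation $\Delta_S\varphi = f_S - f_{\av}$ on $S$ (with the Schauder estimate $\|\varphi\|_{C^{2,\alpha}} \le C(g,S)$), extends $\varphi$, and tests against $\nabla\varphi$, so that stationarity kills $\int\divergence_P(\nabla\varphi)\,dV$ and the remaining terms are controlled by the tilt bound and a Hessian estimate along the leaves. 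Your ``tangential fields $Y(\pi(x))^\top$'' amount to the same thing once specialized to $Y = \nabla_S\varphi$; making that choice explicit is what closes the gap between an abstract approximate-constancy statement and the quantitative bound with the right dependence on $f$.

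One statement in your write-up is not implied by the hypothesis: you write ``$\spt V\cap N_\epsilon(S)\subset\{|t|\le\delta\}$,'' but the assumption \eqref{qct} only bounds the \emph{mass} of $V$ outside $N_\delta(S)$ by $\delta\|V\|$, not the support. The support of $V$ may well extend throughout $N_\epsilon(S)$ (and beyond), so on the annular region $N_\epsilon(S)\setminus N_\delta(S)$ the leaf mean curvature is only $O(\epsilon)$, not $O(\delta)$. This does not break your argument — what you actually use is the product (small mass $\lesssim\delta\|V\|$) $\times$ (bounded integrand on $\overline{N_\epsilon(S)}$), which is still $O(\delta)$ — but the justification should go through the mass bound as in the paper rather than through a support claim, exactly in the spirit of the splitting $V = V\llcorner N_\delta(S) + V\llcorner(M\setminus N_\delta(S))$ you already set up at the start.
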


The proof imitates the proof of \cite[Theorem 2.1]{Song-Zhou21}. For the sake of completeness, we include but postpone it to ``Appendix''.

\section{Metric deformation and approximation by minimal hypersurfaces} \label{sectionthree}
Let $M^{n + 1}$ be a connected manifold of dimension $3 \leq (n + 1) \leq 7$ endowed with a smooth metric $g \in \mathcal{F}_{\thin} \cap \Int(\mathcal{T}_{\infty})$. The main result of this section is a deformation theorem partially generalizing \cite[Theorem 3.2]{Song-Zhou21}, which asserts that given any connected, closed, embedded, $2$-sided, strictly stable minimal hypersurface $S$, by perturbing the metric slightly one can construct a minimal hypersurface of arbitrarily large area, approximating $S$ after renormalization.   

To begin with, we recall some facts related to White's Manifold Structure Theorem for the space of minimal submanifolds with (possibly empty) free boundary. Let $\tilde{M}$ be a connected closed manifold and let $N$ be a $2$-sided, closed, embedded hypersurface in $\tilde{M}$. Let $\tilde{\Gamma}^{(q)}$ ($q \geq 7$ from now on) be the set of $C^{q}$ metrics on $\tilde{M}$ and denote 

\begin{align*}
\widetilde{\mathcal{M}} := \{(\gamma, \Sigma): \hspace{3pt}&\gamma \in \tilde{\Gamma}^{(q)} \text{ and } (\Sigma, \partial \Sigma) \text{ is an embedded minimal hypersurface in $(\tilde{M}, \gamma)$}\\
&\text{with (possibly empty) free boundary } \partial \Sigma \subset N\}. 
\end{align*}

By the work of \cite{White87, White91} and \cite{Ambrozio-Carlotto-Sharp17}, this space has the structure of a separable $C^{2}$ Banach manifold. The natural projection $\tilde{\Pi}: \widetilde{\mathcal{M}} \rightarrow \tilde{\Gamma}^{(q)}$ defined by $\tilde{\Pi}(\gamma, \Sigma) = \gamma$ is a $C^{2}$ Fredholm map with Fredholm index $0$. All regular points of $\tilde{\Pi}$ form the space
\[
\widetilde{\mathcal{M}}_{\reg} := \{(\gamma, \Sigma) \in \widetilde{\mathcal{M}}: \Sigma \text{ is non-degenerate}\}, 
\]
which is a countable union of open sets $\mathcal{U}_{i}$ such that $\tilde{\Pi}$ maps each $\mathcal{U}_{i}$ homeomorphically onto an open subset of $\tilde{\Pi}$. Given a regular pair $(\gamma, \Sigma) \in \widetilde{\mathcal{M}}_{\reg}$, if $\Sigma$ is $1$-sided, its connected $2$-sided double cover may still carry positive Jacobi field. Set 

\begin{align*}
\widetilde{\mathcal{M}}_{\sided} := \{(\gamma, \Sigma) \in \widetilde{\mathcal{M}}_{\reg}: \hspace{3pt}&\Sigma \text{ is $1$-sided, but its $2$-sided double cover} \\
&\text{carries a positive Jacobi field}\}  
\end{align*}

\noindent and we prove a free boundary version of \cite[Lemma 3.1]{Song-Zhou21}. 

\begin{lem}
    The set $\widetilde{\mathcal{M}}_{\sided}$ is a $C^{2}$ Banach submanifold of $\widetilde{\mathcal{M}}$, of codimension $1$. 
\end{lem}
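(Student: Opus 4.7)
The plan is to realize $\widetilde{\mathcal{M}}_{\sided}$ locally as the zero set of a $C^2$ function with nonvanishing differential, and then invoke the Banach-space implicit function theorem, following the template of \cite[Lemma 3.1]{Song-Zhou21} while carefully tracking the Robin-type boundary condition arising from the free boundary. Fix $(\gamma_0, \Sigma_0) \in \widetilde{\mathcal{M}}_{\sided}$ and let $\mathcal{U} \subset \widetilde{\mathcal{M}}_{\reg}$ be an open set from the chart decomposition containing $(\gamma_0, \Sigma_0)$, on which $\tilde{\Pi}$ is a homeomorphism onto an open $V \subset \tilde{\Gamma}^{(q)}$. The inverse yields a $C^2$ section $\gamma \mapsto \Sigma_\gamma$ over $V$ with $\Sigma_{\gamma_0} = \Sigma_0$. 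After shrinking $V$, each $\Sigma_\gamma$ remains $1$-sided, and its connected $2$-sided double cover $\tilde{\Sigma}_\gamma$, equipped with the pulled-back metric and pulled-back free boundary, depends $C^2$-smoothly on $\gamma$.

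Next I would let $\lambda_1(\gamma)$ denote the first eigenvalue of the Jacobi operator on $\tilde{\Sigma}_\gamma$ subject to the Robin condition $\partial_\eta \phi = \mathrm{II}_N(\nu,\nu)\phi$ on $\partial \tilde{\Sigma}_\gamma$, which is the boundary condition appearing in the second variation of area for free boundary minimal hypersurfaces. By hypothesis $\lambda_1(\gamma_0) = 0$ and the corresponding eigenfunction $\phi_0$ can be taken strictly positive, so $\lambda_1(\gamma_0)$ is simple; standard analytic perturbation theory (Kato) then gives $\lambda_1 \in C^2(V)$ together with a $C^2$ family of normalized positive eigenfunctions $\phi_\gamma$. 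By the maximum principle, $\widetilde{\mathcal{M}}_{\sided} \cap \mathcal{U}$ is precisely the level set $\{\lambda_1 \circ \tilde{\Pi}|_{\mathcal{U}} = 0\}$.

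The crux, and the main obstacle, is showing that $d\lambda_1|_{\gamma_0}$ is a nonzero continuous linear functional on $T_{\gamma_0}\tilde{\Gamma}^{(q)}$. Differentiating the eigenvalue equation $J_\gamma \phi_\gamma = \lambda_1(\gamma)\phi_\gamma$ at $\gamma_0$ in a direction $h$ and pairing with $\phi_0$ produces a formula of the schematic form
\[
d\lambda_1|_{\gamma_0}(h) = \int_{\tilde{\Sigma}_0} \phi_0^2 \, Q_1(h) \, d\tilde{\Sigma}_0 + \int_{\partial \tilde{\Sigma}_0} \phi_0^2 \, Q_2(h) \, d(\partial \tilde{\Sigma}_0),
\]
in which $Q_1$ and $Q_2$ are first-order linear expressions in $h$ assembled from the variation of $\Sigma_\gamma$ supplied by White's implicit function theorem, the variation of the induced metric, the variation of the Jacobi potential $|A|^2 + \Ric(\nu,\nu)$, and (in $Q_2$) the variations of the outward conormal and of the Robin coefficient $\mathrm{II}_N(\nu,\nu)$. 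Taking $h$ compactly supported in a small geodesic ball of $\tilde{M}$ disjoint from $N$ kills the boundary contribution, and a suitable conformal bump $h = \rho \, g_0$ computed as in \cite[Lemma 2]{Marques-Neves-Song19} and adapted as in \cite[Lemma 3.1]{Song-Zhou21} produces $Q_1(h)$ not identically zero on the support of $\rho$, whence $d\lambda_1|_{\gamma_0}(h) \neq 0$ because $\phi_0^2 > 0$. The delicate point is verifying that the new Robin boundary term in the second variation does not obstruct nondegeneracy, which is achieved by this localization of $h$ away from $N$.

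Nondegeneracy of $d\lambda_1|_{\gamma_0}$ implies that its kernel is closed and topologically complemented in $T_{\gamma_0}\tilde{\Gamma}^{(q)}$, and the Banach-space implicit function theorem applied to $\lambda_1 \circ \tilde{\Pi}|_{\mathcal{U}}$ then exhibits $\widetilde{\mathcal{M}}_{\sided} \cap \mathcal{U}$ as a $C^2$ Banach submanifold of $\widetilde{\mathcal{M}}_{\reg}$ of codimension $1$ near $(\gamma_0, \Sigma_0)$. Since $(\gamma_0, \Sigma_0)$ was arbitrary, the global conclusion follows.
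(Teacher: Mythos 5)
Your proposal follows essentially the same route as the paper's proof: realize $\widetilde{\mathcal{M}}_{\sided}$ locally as the zero level set of the first Robin eigenvalue $\tilde{\lambda}_1(\gamma, \tilde{\Sigma}_\gamma)$ of the Jacobi operator on the $2$-sided double cover, verify $C^{2}$-dependence and nondegeneracy of the differential via a localized conformal perturbation, and conclude by the implicit function theorem; taking the support of $h$ disjoint from $N$ plays the same role as the paper's condition $f = 0$, $Df = 0$ on $U \cap (\Sigma \setminus N)$, namely killing the boundary contribution while keeping $\Sigma$ minimal for all $t$. One small inaccuracy: "$Q_1(h)$ not identically zero on $\supp \rho$" together with $\phi_0^2 > 0$ does not by itself force $\int \phi_0^2 Q_1(h) \neq 0$ (the integrand could change sign); the correct requirement, which the paper makes explicit by demanding $\Hess f(\nu,\nu) \geq 0$ on $U \cap (\Sigma \setminus N)$ and $> 0$ on a proper open subset, is that the conformal factor be chosen so that $Q_1(h)$ is nonnegative on $\Sigma$ and strictly positive somewhere, yielding a strictly positive derivative.
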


\begin{proof}
    We cover $\widetilde{\mathcal{M}}_{\reg}$ by countably many open sets $\{\mathcal{U}_{i}\}$ such that $\tilde{\Pi}$ maps each $\mathcal{U}_{i}$ diffeomorphically onto its image $\tilde{\Pi}(\mathcal{U}_{i}) \subset \tilde{\Gamma}^{(q)}$. Given $(\gamma, \Sigma) \in \widetilde{\mathcal{M}}_{\sided} \cap \mathcal{U}$, in a neighborhood $\mathcal{V} \subset \mathcal{U}$ of $(\gamma, \Sigma)$, all other pairs $(\gamma', \Sigma')$ come from $C^2$ graphical deformations of $(\gamma, \Sigma)$ and hence are $1$-sided. If $\Sigma$ has empty boundary, then all other pairs in $\mathcal{V}_{1}$ have empty boundary and the argument in \cite{Song-Zhou21} implies that $ \widetilde{\mathcal{M}}_{\sided} \cap \mathcal{V}_{1}$ is a codimension $1$, $C^{2}$ Banach submanifold of $\mathcal{V}_{1}$. Otherwise, $\Sigma$ has free boundary in $N$ and all other pairs in $\mathcal{V}_{2}$ have free boundary in $N$. Our goal is to show that $\widetilde{\mathcal{M}}_{\sided} \cap \mathcal{V}_{2}$ is a codimension $1$, $C^{2}$ Banach submanifold of $\mathcal{V}_{2}$. Consider the following functional defined in a neighborhood of $\tilde{\Pi}(\mathcal{V}_{2})$: 
    \[
    \tilde{\lambda}_1: \gamma' \mapsto \tilde{\lambda}_{1}(\gamma', \tilde{\Sigma}'),  
    \]
    where $\tilde{\lambda}_{1}(\gamma', \tilde{\Sigma}')$ is the first eigenvalue of the Jacobi operator of the $2$-sided double cover $\tilde{\Sigma}'$ of $\Sigma'$. Note that the coefficients of the Jacobi operator and the boundary condition depend in a $C^{2}$ manner with respect to $\gamma' \in \tilde{\Gamma}^{(q)}$. Since the Jacobi operator is self-adjoint with respect to a volume measure depending on $\gamma'$ and the boundary condition, $\tilde{\lambda}_1: \tilde{\Pi}(\mathcal{V}_{2}) \subset \tilde{\Gamma}^{(q)} \rightarrow \mathbb{R}$ is a $C^2$ map (see \cite{Uhlenbeck76} and ``Appendix'' of \cite{Song-Zhou21} for more details).
    \\   
    \\
    \noindent \textbf{Claim.} The differential of $\tilde{\lambda}_{1}$ is nonzero at metric in $\tilde{\Pi}(\widetilde{\mathcal{M}}_{\sided} \cap \mathcal{V}_{2}) \subset \tilde{\Gamma}^{(q)}$.  \\
    \\
    \noindent \textit{Proof of Claim.} Given $(g, \Sigma) \in \widetilde{\mathcal{M}}_{\sided} \cap \mathcal{V}_{2}$ and $\tilde{\Sigma}$, choose a small open set $U \subset \tilde{M}$ away from $\partial \Sigma$ such that $\Sigma \cap U$ is a $2$-sided $n$-ball with a local unit normal $\nu$. Consider the $1$-parameter smooth perturbation $g_{t} = e^{2tf} g$, where $f \in C^{q - 1}_{c}(U)$ satisfies 
    \begin{enumerate}[label=(\roman*)]
        \item $f = 0, Df = 0, \Hess f(\nu, \nu) \geq 0$ on $U \cap \Sigma$,
        \item $\Hess f(\nu, \nu) > 0$ on some proper open subset of $U \cap \Sigma$.
    \end{enumerate}
    Under $g_{t}$, $\Sigma$ remains as a non-degenerate minimal hypersurface with free boundary in $N$ and the restrictions $(g_{t})|_{\Sigma}$ remain unchanged. We denote by $\tilde{\Sigma}$ the connected $2$-sided double cover of $\Sigma$ and $\varphi_{1, t}$ the first normalized eigenfunction associated to $\tilde{\lambda}_{1}(g_t, \tilde{\Sigma})$ under $g_{t}$. Note that $\varphi_{1, t}$ satisfies the following elliptic PDE subject to the boundary condition:   
    \begin{equation}\label{fbpde}
        \begin{cases}
            L_{t}\varphi_{1, t}  = - \tilde{\lambda}_{1}(g_t, \tilde{\Sigma}) \varphi_{1, t} \hspace{15pt} \text{ on } \tilde{\Sigma}\\
            \frac{\partial \varphi_{1, t}}{\partial \eta} = h(\vec{n}, \vec{n}) \varphi_{1, t} \hspace{39pt} \text{ on } \partial \tilde{\Sigma}
        \end{cases},
    \end{equation}
    where $L_{t} = \Delta_{t} + \Ric_{t}(\nu, \nu) + |A_{t}|^2$ is the Jacobi operator of $\tilde{\Sigma}$ under $g_{t}$, $\eta$ is the outward unit co-normal to $\partial \tilde{\Sigma}$ invariant under $g_{t}$, and $h$ is the second fundamental form of $N$ with respect to the unit normal $\vec{n}$ invariant under $g_t$. As $\varphi_{1, t}$ can be chosen to be $C^{1}$ in $t$, a simple calculation leads to 
    \begin{align}\label{fefb}
    \begin{split}
        \tilde{\lambda}_{1}(g_{t}, \tilde{\Sigma}) =\hspace{3pt}& \int_{\tilde{\Sigma}} |\nabla \varphi_{1, t}|^2 - (\Ric_{t}(\nu, \nu) + |A_{t}|^2) \varphi_{1, t}^2 - \int_{\partial \tilde{
        \Sigma}} h(\vec{n}, \vec{n}) \varphi_{1, t}^2
        \\
        =\hspace{3pt}& \int_{\tilde{\Sigma}} |\nabla \varphi_{1, t}|^2 - (\Ric(\nu, \nu) + |A|^2) \varphi_{1, t}^2 - \int_{\partial \tilde{\Sigma}} h(\vec{n}, \vec{n}) \varphi_{1, t}^2\\
        &+ t \int_{\tilde{\Sigma}} (n - 1) \Hess f(\nu, \nu) \varphi_{1, t}^{2}.  
    \end{split}    
    \end{align}
    Since $\varphi_{1, 0}$ is the eigenfunction associated to $\tilde{\lambda}_{1}(g, \tilde{\Sigma}) = 0$, the $t$-derivatives of the first two terms add up to $0$. The $t$-derivative of the remaining term is positive by assumption on $f$ and because $\varphi_{1, 0}$ is nowhere vanishing. Approximating $g_{t}$ by a smooth variation $\tilde{g}_t$ of $C^{q}$ metrics completes the proof of the claim.\\
    \\
    \indent By the Implicit Function Theorem, we deduce that the intersection $\widetilde{\mathcal{M}}_{\sided} \cap \mathcal{V}_{2}$ has codimension $1$ in $\mathcal{V}_{2}$. Since there are only countably many $1$-sided pairs for each fixed $\gamma$, we finish the proof.    
\end{proof}

Let $g_{t}$ be a smooth $1$-parameter family of metrics on $M$ so that for each fixed $m \geq m_{0}$ the restriction $g_{t}|_{M_m}: [0, 1] \rightarrow \Gamma^{(q)}$ is a smooth $1$-parameter family of metrics on $M_m$. We may isometrically embed $(M_m, \partial M_m, g_{t}|_{M_m})$ into a connected closed manifold $(\tilde{M}_{m}, g_{t}|_{M_m})$ of the same dimension so that $\partial M_{m}$ is a $2$-sided, closed, embedded hypersurface in $\tilde{M}_{m}$. 

By Smale's Transverality Theorem, since the regularity of $\Pi$ is $C^{2}$, we may perturb $g_{t}|_{M_m}$ slightly in the $C^{\infty}$-topology to get another smooth $1$-parameter family $\tilde{g}_{m, t}: [0, 1] \rightarrow \tilde{\Gamma}^{(q)}$ that is transversal to both $\tilde{\Pi}: \widetilde{\mathcal{M}} \rightarrow \tilde{\Gamma}^{(q)}$ and $\tilde{\Pi}: \widetilde{\mathcal{M}}_{\sided} \rightarrow \tilde{\Gamma}^{(q)}$. By Sard's Theorem and the transversality of $\{\tilde{g}_{m, t}\}$, for $t \in [0, 1]$ a.e. in the sense of Lebesgue measure, $\tilde{g}_{m, t}$ is a regular value of $\tilde{\Pi}: \widetilde{\mathcal{M}} \rightarrow \tilde{\Gamma}^{(q)}$, i.e. every embedded minimal hypersurface $\Sigma$ in $(\tilde{M}_{m}, \tilde{g}_{m, t})$ with (possibly empty) free boundary $\partial \Sigma \subset \partial M_{m}$ is non-degenerate. Since $\tilde{\Pi}^{-1}(\{\tilde{g}_{m, t}\})$ is a $1$-dimensional submanifold with boundary of $\widetilde{\mathcal{M}}$ and $\widetilde{\mathcal{M}}_{\sided}$ is a codimension $1$ submanifold, by transversality $\tilde{\Pi}^{-1}(\{\tilde{g}_{m, t}\})$ intersects $\widetilde{\mathcal{M}}_{\sided}$ locally only at finitely many points. Hence, for a.e. $t \in [0, 1]$ in the sense of Lebesgue measure, for any $1$-sided $\Sigma$ above, its $2$-sided double cover $\tilde{\Sigma}$ does not have positive Jacobi fields.   
By restricting to $M_{m}$, $\{g_{t}|_{M_m}\}$ admits a smooth perturbation to $\{\tilde{g}_{m, t}\}$ such that for a.e. $t \in [0, 1]$ in the sense of Lebesgue measure, the following holds: 
\begin{enumerate}[label=(\roman*)]
        \item every almost properly embedded free boundary minimal hypersurface in $(\hat{M}_{m, \tilde{g}_{m, t}}, \tilde{g}_{m, t})$ is non-degenerate,  
        \item for any connected, $1$-sided, almost properly embedded free boundary minimal hypersurface in $(\hat{M}_{m, \tilde{g}_{m, t}}, \tilde{g}_{m, t})$, its $2$-sided double cover has no positive Jacobi field.
\end{enumerate}
Under this assumption on metric, Theorem \ref{DSCW} ensures the existence of a $C^{q}$ compact, multiplicity one, almost properly embedded free boundary minimal hypersurface contained in $(\hat{M}_{m, \tilde{g}_{m, t}}, \tilde{g}_{m, t})$ for a.e. $t \in [0, 1]$.
\\
\\
\indent The subsequent facts are crucial for implementing the global perturbation of a family of metrics. Let $(M^{n + 1}, g)$ be a complete manifold of dimension $3 \leq (n + 1) \leq 7$. For an integer $q \geq 3$ or $q = \infty$, we denote by $\Gamma^{(q)}$ the set of $C^{q}$ metrics on $M$, endowed with the strong $C^{q}$-topology. If we assume $g \in \Int(\mathcal{T}_{\infty})$, then by definition any connected finite volume complete minimal hypersurface in $(M, g)$ is closed. The metric $g \in \Int(\mathcal{T}_{\infty})$ is said to be embedded bumpy (resp. immersed) if every closed embedded (resp. immersed) minimal hypersurfaces in $M$ is non-degenerate. By \cite{White91, White17}, the set of embedded or immersed bumpy metrics is a $C^{q}$-generic subset in $\Int(\mathcal{T}_{\infty})$ for any $q \geq 3$ or $q = \infty$, in the sense of Baire category. 

Fix $q \geq 3$ in this Section and let $g \in \mathcal{F}_{\thin}$ be a $C^{q}$ metric. Let $S \subset (M, g)$ be a connected, closed, embedded, 2-sided minimal hypersurface that is strictly stable. By the Implicit Function Theorem \cite[Theorem 2.1]{White91}, for any metric $\hat{g}$ in a small $C^{q}$-neighborhood of $g$, we can find a unique minimal hypersurface $S_{\hat{g}}$ as a section of the normal bundle of $S$ with small $C^{j, \alpha}$-norm that is also $2$-sided and strictly stable.

Fix a number $\hat{\epsilon} = (\epsilon_1, \epsilon_2, \ldots)$. Recall that if $b_1, b_2, \ldots$ are open balls forming a locally finite covering of $M$, then for any symmetric $2$-tensor $g'$ we will write
\[
||g'||_{C^{q}} < \hat{\epsilon} \hspace{10pt} (\text{or } ||g'||_{C^{q}} < \epsilon \text{ if } \hat{\epsilon} = (\epsilon, \epsilon, \ldots))  
\]
instead of 
\[
\forall j, \hspace{10pt} ||g'|_{b_j}||_{C^{q}} < \epsilon_j,
\]
where the norms $||\cdot||_{C^q}$ are computed with respect to the background metric $g$.

Now, we are ready to state the deformation theorem. 

\begin{thm}\label{DT}
    Let $M^{n + 1}$ be a smooth manifold of dimension $3 \leq (n + 1) \leq 7$ endowed with a smooth metric $g \in \mathcal{F}_{\thin} \cap \Int(\mathcal{T}_{\infty})$. For any integer $l > 0$ and for any connected, closed, embedded, $2$-sided minimal hypersurface $S \subset (M, g)$ that is strictly stable, there is a smooth metric $g' \in \mathcal{F}_{\thin} \cap \Int(\mathcal{T}_{\infty})$ with 
    \[
    ||g' - g||_{C^{l}} \leq \frac{1}{l}, 
    \]
    so that $S$ deforms to a connected, closed, embedded, $2$-sided, strictly stable minimal hypersurface, still denoted by $S$, in $(M, g')$, and there is a non-degenerate, closed, embedded minimal hypersurface $\Sigma \subset (M, g')$ satisfying the following with respect to the metric $g'$: 
    \begin{enumerate}[label=\emph{(\roman*)}]
    \item $\Sigma \cap S = \emptyset$,
    \item $l < ||\Sigma||$,
    \item $\mathbf{F}\left( \frac{[S]}{||S||}, \frac{[\Sigma]}{||\Sigma||} \right) \leq h(||\Sigma||)$ for some function $h$ with $h(||\Sigma||) = 0$ as $||\Sigma|| \rightarrow \infty$. 
    \end{enumerate}
\end{thm}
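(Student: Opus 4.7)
The plan is to adapt the perturbation scheme of \cite[Theorem 3.2]{Song-Zhou21} to the single-cylindrical machinery developed in Section~\ref{sectiontwo}, and then take the limit of the compact exhaustion $\{M_{m}\}$. First I fix a smooth non-negative function $\phi \in C^{\infty}_{c}(M)$ that vanishes in a neighborhood of $S$ and is positive somewhere on $M \setminus S$, and consider the conformal family $g_{t} := e^{2t\phi}g$ for $t \in [0,1]$, with $\phi$ scaled so that $\|g_{t} - g\|_{C^{l}} < 1/(2l)$. Because $\phi$ vanishes near $S$, the hypersurface $S$ remains the \emph{same} strictly stable, closed, embedded, $2$-sided minimal hypersurface with the same area under every $g_{t}$, and openness of $\mathcal{F}_{\thin} \cap \Int(\mathcal{T}_{\infty})$ in the strong $C^{\infty}$-topology keeps $g_{t}$ inside this class. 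Applying the transversality argument recalled right before the statement of Theorem~\ref{DT}, I further perturb $g_{t}$ to a smooth family $\tilde{g}_{t}$, arbitrarily close in the strong $C^{\infty}$-topology, so that for every $m \geq m_{0}$ and a.e.\ $t \in [0,1]$ the hypotheses (i)--(ii) of Lemma~\ref{DSCW} hold on $(\hat{M}_{m,\tilde{g}_{t}}, \tilde{g}_{m,t})$.

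At each such good $t$ and each integer $p$, Lemma~\ref{DSCW} produces a compact, almost properly embedded free boundary minimal hypersurface $\Gamma_{m,p,t} \subset \hat{M}_{m,\tilde{g}_{t}} \setminus S$ with $\Index(\Gamma_{m,p,t}) \leq p$ and
\[
\frac{d}{dt}\tilde{\omega}_{p}(\hat{M}_{m,\tilde{g}_{t}}, \tilde{g}_{m,t})
= n \int_{\Gamma_{m,p,t}} \phi \, d\Gamma_{m,p,t}.
\]
Since $S$ and $\Area_{g}(S)$ are preserved, the two-sided estimate \eqref{subsc} sandwiches $p^{-1}\tilde{\omega}_{p}(\hat{g}_{m,t})$ inside a window of width $2\epsilon + O(f(p)/p)$ that is \emph{independent of $t$ and $m$}, with $f$ locally bounded as in Lemma~\ref{SAB}. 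Integrating the derivative identity from $0$ to $1$ and dividing by $p$ gives
\[
\frac{n}{p}\int_{0}^{1}\!\!\int_{\Gamma_{m,p,t}} \phi \, d\Gamma_{m,p,t}\, dt \leq 2\epsilon + O(f(p)/p).
\]
By Fubini and pigeonholing in $t$, and using the lower mass bound $\|\Gamma_{m,p,t}\| \geq (\Area_{g}(S) - \epsilon)p$, there is a $t_{m,p} \in [0,1]$ at which the $\phi$-weighted normalized mass of $\Gamma_{m,p,t_{m,p}}$ is $O(\epsilon)$. Choosing $\phi$ to equal $1$ on $M \setminus N_{\delta}(S)$ translates this into the concentration estimate
\[
\frac{\|\Gamma_{m,p,t_{m,p}}\|(M \setminus N_{\delta}(S))}{\|\Gamma_{m,p,t_{m,p}}\|} = O(\epsilon),
\]
from which item~(iii) will follow via Theorem~\ref{qctc} after matching $\delta = \delta(\epsilon) \to 0$.

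The hard part, and the genuinely new difficulty relative to \cite{Song-Zhou21}, is upgrading these $\Gamma_{m,p,t_{m,p}}$ into a \emph{closed} embedded minimal hypersurface $\Sigma \subset (M, g')$ rather than a merely free boundary one inside some $\hat{M}_{m}$. The two pathologies to exclude are escape to infinity and contact with the artificial boundary $\partial \hat{M}_{m} \setminus S$. Both are controlled by the concentration estimate above: once $\delta$ is small and $m$ is large, any component of positive normalized mass of $\Gamma_{m,p,t_{m,p}}$ lies inside $N_{\delta}(S) \subset \Int(M_{m})$, hence has empty free boundary and survives the limit $m \to \infty$. Following the scheme outlined in the introduction, I take a diagonal sequence in $(m,p,\epsilon)$ with $\epsilon \to 0$, $\delta(\epsilon) \to 0$, and $p \to \infty$, and invoke Sharp's compactness theorem together with the transversality-forced non-degeneracy, exactly as in the proof of Theorem~\ref{GODT}, to extract a limit $\Sigma$ which is closed, non-degenerate, of multiplicity one, disjoint from $S$, with $\|\Sigma\| > l$, and satisfying item (iii). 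The resulting metric $g' := \tilde{g}_{t_{m,p}}$ is then $C^{l}$-close to $g$ by construction and remains in $\mathcal{F}_{\thin} \cap \Int(\mathcal{T}_{\infty})$ by openness.
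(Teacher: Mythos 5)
Your proposal inverts the paper's choice of perturbation, and this inversion creates a gap precisely at the point you identify as the "genuinely new difficulty": ruling out escape to infinity. You take $\phi$ vanishing near $S$, claim to choose it "$=1$ on $M\setminus N_\delta(S)$," and use the upper bound on $\int_0^1 \frac{d}{dt}\tilde\omega_p\,dt$ to force $\int_\Gamma \phi/\|\Gamma\|$ to be small at some time. But a $\phi$ that equals $1$ on the unbounded set $M\setminus N_\delta(S)$ cannot be in $C_c^\infty(M)$ as you wrote; and even dropping compact support does not save the argument, because to keep $\tilde g_t$ inside the \emph{strong} $C^\infty$-neighborhood $O(g,\mathbf{e},q)\subset\Int(\mathcal T_\infty)$ one must have $\|(\tilde g_t-g)|_{b_i}\|_{C^q}<\epsilon_i$ with $\epsilon_i\to 0$ permitted, which forces $\phi$ to decay at infinity. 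Once $\phi$ decays, the inequality $\int_\Gamma\phi/\|\Gamma\|=O(\epsilon)$ controls mass only on the bounded region where $\phi$ is bounded below; a positive fraction of $\|\Gamma_{m,p,t}\|$ could still sit in $M_m\setminus M_{m''}$ where $\phi$ is tiny, and nothing in your argument excludes this. The concentration estimate needed for Theorem~\ref{qctc} is therefore not established.

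The paper's choice is the logically opposite one and this is not cosmetic: $\varphi$ is supported in the shrinking neighborhood $N_{1/(2k)}(S)$ and $\equiv 1$ on a smaller tube, and $g_t=g+t\varphi g$ \emph{increases} $\Area(S)$. The Weyl law lower bound $\tilde\omega_p\ge p\Area_{\hat g_{m,t}}(S)$ then forces the width to rise at rate close to $\frac{n}{2}\Area(S)$, so the derivative formula yields $\int_\Gamma\varphi/\|\Gamma\|\approx 1$ (estimates (3.9)--(3.11) in the paper). Since $\varphi\le 1$ everywhere and $\varphi$ is supported near $S$, this says all but a $\frac{1}{k+1}$-fraction of mass lies in $N_{1/k}(S)$, with no hypothesis at infinity needed. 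In short, the paper runs the derivative argument as a \emph{lower} bound driven by the growth of $\Area(S)$, while you run it as an \emph{upper} bound from the sandwich; only the former automatically controls mass at arbitrary distance. Two further points that need attention even if the above were repaired: (a) the paper pins a single time $s\in\mathcal A''$ that works simultaneously for all large $m$ (via the $m$-uniform bounds of Theorem~\ref{ALUB} and the FTC step (3.8)), whereas your $t_{m,p}$ a priori depends on $(m,p)$ and would frustrate the diagonal limit; and (b) the paper uses the shrinking time interval $[0,t_k]$ with $t_k=\exp(-k)$ precisely to offset $|\nabla^{l'}\varphi|\sim k^{l'}$ and keep $\|g_t-g\|_{C^l}$ small, and the analogous rescaling in your conformal family would proportionally shrink $\frac{d}{dt}\tilde\omega_p=n\int_\Gamma\phi$, so the "window width $O(\epsilon+f(p)/p)$" bookkeeping needs to be redone with this scaling tracked explicitly.
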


\begin{proof}
    For a metric $g_{m}$ in a small $C^{3}$-neighborhood of $g|_{M_{m}}$, we denote by $S_{g_{m}}$ the $2$-sided, strictly stable minimal hypersurface coming from $S$. Recall that in Section \ref{twopointtwo}, for each integer $m \geq m_{0}$ we have constructed a compact Riemannian manifold with boundary $(\hat{M}_{m, g_m}, g_{m})$ and showed the existence of a compact, almost properly embedded free boundary minimal hypersurface $\Gamma_{m, p}$ contained in $\hat{M}_{m, g_{m}} \setminus S_{g_{m}}$, satisfying 
    \[\tilde{\omega}_{p}(\hat{M}_{m, g_{m}}, g_m) = \Area_{g_m}(\Gamma_{m, p}), \hspace{10pt} \Index(\Gamma_{m, p}) \leq p.\]
    Let $k$ be a large integer. By constructing certain perturbations $\{\tilde{g}_t\}$ of the given metric $g$, we would like to obtain a closed, non-degenerate minimal hypersurface $\Sigma \subset (M, \tilde{g}_{s})$ of large area, approximating $S$ after renormalization. 
    
    Compared to the proof of \cite[Theorem 3.2]{Song-Zhou21}, the major problem appearing in our case is that as $m \rightarrow \infty$, the sequence of compact, almost properly embedded free boundary minimal hypersurfaces $\{\Gamma_{m, p_{k}}\}$ (with $p_{k}$ fixed) might escape to infinity and no compactness theorem applies to yield a subsequence limit $\Sigma$. To resolve this issue, we first show that as a varifold, $\Gamma_{m, p_{k}}$ has most of its mass concentrated in a small neighborhood of $S$. Then by selecting all connected components of $\Gamma_{m, p_{k}}$ whose support intersect the small neighborhood of $S$, we obtain a sequence of compact, almost properly embedded free boundary minimal hypersurfaces $\{\tilde{\Gamma}_{m, p_{k}}\}$. Borrowing an idea from Song in his proof of Theorem \ref{GODT}, we reason as follows: since the area and index are uniformly bounded, by taking a subsequence limit as $m \rightarrow \infty$, we get a complete, finite volume, embedded minimal hypersurface $\Sigma \subset (M, \tilde{g}_{s})$ intersecting the small neighborhood of $S$. By assumptions on the perturbed metrics $\tilde{g}_{s}$, 
    \begin{enumerate}[label=(\roman*)]
        \item $\tilde{g}_{s} \in \Int(\mathcal{T}_{\infty})$, 
        \item $\tilde{g}_{s}$ is embedded bumpy,  
        \item for any connected, $1$-sided, closed, embedded minimal hypersurface in $(M, \tilde{g}_{s})$, its $2$-sided double cover has no positive Jacobi field, 
    \end{enumerate}
    we deduce that $\Sigma$ must be closed and non-degenerate. The quantitative estimates relating the index and area no longer hold, but other properties hold true, albeit with minor modifications. 

    Now, we construct the aforementioned perturbations of $g$. Consider a smooth non-negative symmetric $2$-tensor $h := \varphi g$ on $M$, where $\varphi: M \rightarrow [0, 1]$ is a smooth cutoff function supported in a $1/(2k)$-neighborhood of $S$, such that $\varphi = 1$ in a tubular neighborhood of $S$. Moreover, we require 
    \[
    |\nabla^{l'} h| \leq C(l, S, g) k^{l'} \hspace{10pt} \text{ for all } 0 \leq l' \leq l. 
    \] 
    
    Set $t_{k} := \exp(-k)$ and consider the $1$-parameter family of smooth metrics on $M$: 
    \[
    g_{t} := g + th, \hspace{10pt} t \in[0, t_{k}].
    \]
    For $k$ large enough, we have 
    \[
    g_{t} \in \mathcal{F}_{\thin} \cap \Int(\mathcal{T}_{\infty}), \hspace{10pt} ||g_{t} - g||_{C^{l}} < \frac{1}{2l}. 
    \]
    
    To ensure the limit minimal hypersurface $\Sigma$ is non-degenerate and multiplicity one, we perturb $g_{t}$ in the strong $C^{\infty}$-topology to get another family $\tilde{g}_{t}$ of $C^{q}$ metrics such that 
    \[
    \tilde{g}_{t} \in \mathcal{F}_{\thin} \cap \Int(\mathcal{T}_{\infty}), \hspace{10pt} ||\tilde{g}_{t} - g||_{C^{l}} < \frac{1}{2l},
    \] 
    and there is a full Lebesgue measure set $\mathcal{A} \subset [0, t_{k}]$ satisfying the following: for any $t \in \mathcal{A}$, $\tilde{g}_{t}$ is embedded bumpy and the $2$-sided double cover of any $1$-sided, closed, embedded minimal hypersurface in $(M, \tilde{g}_{t})$ has no positive Jacobi field. When $M$ is a closed manifold with the dimensional constraint, this perturbation result is guaranteed by the discussion before \cite[Theorem 3.2]{Song-Zhou21}. It remains valid for complete manifolds if we restrict to the subset $\Int(\mathcal{T}_{\infty}) \subset \Gamma^{(q)}$ in which the Manifold Structure Theorem applies.       
    
    For each fixed $m \geq m_0$, by Smale's Transversality Theorem and Sard's Theorem, we may perturb $\tilde{g}_{t}|_{M_m}$ in the $C^{\infty}$-topology into another family $\tilde{g}_{m, t}$ of $C^{q}$ metrics, such that there is a full Lebesgue measure set $\mathcal{A}' \subset \mathcal{A}$ satisfying the following: for any $t \in \mathcal{A}'$ and $m \geq m_{0}$,  
    \begin{enumerate}[label=(\roman*)]
        \item every almost properly embedded free boundary minimal hypersurface in $(\hat{M}_{m, \tilde{g}_{m, t}}, \tilde{g}_{m, t})$ is non-degenerate,  
        \item for any connected, $1$-sided, almost properly embedded free boundary minimal hypersurface in $(\hat{M}_{m, \tilde{g}_{m, t}}, \tilde{g}_{m, t})$, its $2$-sided double cover has no positive Jacobi field.
    \end{enumerate}
    Moreover, we can slightly adjust the perturbations $\{\tilde{g}_{m, t}\}$ so that
    \begin{enumerate}[label=(\roman*)]
        \item $\tilde{g}_{m, t}$ converges smoothly and locally uniformly to $\tilde{g}_{t}$ as $m \rightarrow \infty$,  
        \item for any $k$, all $t \in [0, t_{k}]$ and $m \geq m_{2}$,  
        \begin{align*}\label{are}
        |\Area_{\tilde{g}_{m, t}}(S_{\tilde{g}_{m, t}}) - \Area_{\tilde{g}_{t}}(S_{\tilde{g}_{t}})| \leq t_{k}^{2}. 
        \end{align*} 
        \item for $m$ and $k$ large enough, for all $t \in [0, t_{k}]$, and all $n$-plane $P$ in the Grassmannian of tangent of $(n + 1)$-planes of $\hat{M}_{m, \tilde{g}_{m, t}}$, 
        \[|\Tr_{P, \tilde{g}_{m, t}} \frac{\partial \tilde{g}_{m, t}}{\partial t} - \Tr_{P, \tilde{g}_{t}|_{M_m}} \frac{\partial \tilde{g}_{t}|_{M_m}}{\partial t}| \leq t_{k}.\]
    \end{enumerate}
    
    As $t$ varies, $S_{g_{t}} = S$ remains minimal and satisfies that
    \[
    \frac{d}{dt} \Area_{g_{t}}(S_{g_{t}}) = \frac{n}{2(1 + t)} \Area_{g_{t}}(S_{g_{t}}). 
    \]
    By choosing the perturbation $\{\tilde{g}_{t}\}$ of $\{g_{t}\}$ smaller if necessary, we can ensure that for all $t \in [0, t_{k}]$: 
    \begin{align}\label{ade}
    \begin{split}
        |\Area_{g_{t}}(S_{g_{t}}) - \Area_{\tilde{g}_{t}}(S_{\tilde{g}_{t}})| &\leq t_{k}\\
        \left|\frac{d}{dt} \Area_{\tilde{g}_{t}}(S_{\tilde{g}_{t}}) - \frac{n}{2} \Area_{\tilde{g}_{t}}(S_{\tilde{g}_{t}})\right| &\leq \mathbf{c}_{0} \frac{n t_{k}}{2(1 + t_{k})} \leq \mathbf{c}_{0} t_{k},
    \end{split}
    \end{align}
    where $\mathbf{c}_{0} = \mathbf{c}_{0}(M, g, n, S)$ is a constant. Moreover, there exists a constant $\mathbf{c} = \mathbf{c}(M, g, n, S)$ so that for $m \geq m_{3}$ and $k$ large enough, for all $t \in [0, t_{k}]$, and all $n$-plane $P$ in the Grassmannian of tangent $(n + 1)$-planes of $\hat{M}_{m, \tilde{g}_{m, t}}$, 
    \begin{align}\label{tre}
    \begin{split}
        |\Tr_{P, \tilde{g}_{m, t}} \frac{\partial \tilde{g}_{m, t}}{\partial t} - n \varphi| 
        \leq\hspace{3pt}& |\Tr_{P, \tilde{g}_{m, t}} \frac{\partial \tilde{g}_{m, t}}{\partial t} - \Tr_{P, \tilde{g}_{t}|_{M_m}} \frac{\partial (\tilde{g}_{t}|_{M_m})}{\partial t}|\\
        &+ |\Tr_{P, \tilde{g}_{t}|_{M_m}} \frac{\partial (\tilde{g}_{t}|_{M_m})}{\partial t} - \Tr_{P, g_{t}|_{M_m}} \frac{\partial (g_{t}|_{M_{m}})}{\partial t}|\\
        &+ |\Tr_{P, g_{t}|_{M_m}} \frac{\partial (g_{t}|_{M_{m}})}{\partial t} - n \varphi|\\
        \leq\hspace{3pt}& t_{k} + t_{k} + |\Tr_{P, g_{t}|_{M_m}}(\varphi \cdot (g|_{M_m})) - n \varphi| \leq \mathbf{c}t_{k},\\
        \Tr_{P, \tilde{g}_{m, t}} \frac{\partial \tilde{g}_{m, t}}{\partial t} \leq n + \mathbf{c} t_{k}.
    \end{split}
    \end{align} 

    Now, we proceed to describe a quantitative perturbation argument in the same spirit as \cite{Marques-Neves-Song19} and \cite{Song-Zhou21}, but use the remainder term in Theorem \ref{ALUB}. Recall that in Section \ref{twopointone} we have defined the single-cylindrical width $\tilde{\omega}_{p}(\hat{M}_{m, \tilde{g}_{m, t}}, \tilde{g}_{m, t})$ of the compact manifold with boundary $(\hat{M}_{m, \tilde{g}_{m, t}}, \tilde{g}_{m, t})$. Taking derivatives of $\tilde{\omega}_{p}(\hat{M}_{m, \tilde{g}_{m, t}}, \tilde{g}_{m, t})$ for large enough $p$ will give rise to the desired minimal hypersurfaces. By (\ref{subsc}) of Theorem \ref{ALUB}, for all $p \in \mathbb{N}$ and $t \in [0, t_{k}]$ we have for $m \geq \max\{m_{0}, m_{1}, m_{2}\}$: 
    \begin{align}\label{wle}
    \begin{split}
        \Area_{\tilde{g}_{t}}(S_{\tilde{g}_{t}}) - t_{k}^2 \leq \frac{1}{p} \tilde{\omega}_{p}(\hat{M}_{m, \tilde{g}_{m, t}}, \tilde{g}_{m, t}) &\leq \Area_{\tilde{g}_{t}}(S_{\tilde{g}_{t}}) + t_{k}^2 + \frac{\Area_{\tilde{g}_{t}}(S_{\tilde{g}_{t}}) + t_{k}^2 + f(p) + 2}{p}\\
        &\leq \Area_{\tilde{g}_{t}}(S_{\tilde{g}_{t}}) + t_{k}^{2} + \frac{\Area_{g_{t}}(S_{g_{t}}) + t_{k} + f(p) + 3}{p}\\
        &\leq \Area_{\tilde{g}_{t}}(S_{\tilde{g}_{t}}) + t_{k}^2 + \frac{2^{\frac{n}{2}} \Area_{g}(S) + f(p) + 4}{p}\\
        &\leq \Area_{\tilde{g}_{t}}(S_{\tilde{g}_{t}}) + t_{k}^2 + \frac{g(p)}{p},
    \end{split}
    \end{align}
    where $g(p) = 2^{n/2} \Area_{g}(S) + f(p) + 4$ is a sublinear function of $p$ independent of $t$ and $m$. 

    By Lemma \ref{scwll}, let $\mathcal{A}''$ be the full Lebesgue measure set of times in $\mathcal{A}'$ where the derivatives of $\tilde{\omega}_{p}(\hat{M}_{m, \tilde{g}_{m, t}}, \tilde{g}_{m, t})$ exists for all $m \geq m_{0}$. Given $t_{k} = \exp(-k)$, we can find $p_{k}$ a strictly increasing function of $k$ such that $g(p_{k})/p_{k} = o(t_{k}^2)$ and $p_{k} \rightarrow \infty$ as $k \rightarrow \infty$. By Lemma \ref{DSCW}, at each $s \in \mathcal{A}''$, there is a compact, almost properly embedded free boundary minimal hypersurface $\Gamma_{m, p_{k}} \subset \hat{M}_{m, \tilde{g}_{m, s}} \setminus S_{\tilde{g}_{m, s}}$ such that 
    \begin{align}\label{mmsc}
    \begin{split}
        &\tilde{\omega}_{p_{k}}(\hat{M}_{m, \tilde{g}_{m, s}}, \tilde{g}_{m, s}) = \Area_{\tilde{g}_{m, s}}(\Gamma_{m, p_{k}}), \hspace{10pt} \Index(\Gamma_{m, p_{k}}) \leq p_{k},\\
        &\text{ and } \frac{d}{dt}\bigg|_{t = s} \tilde{\omega}_{p_{k}}(\hat{M}_{m, \tilde{g}_{m, t}}, \tilde{g}_{m, t}) = \int_{\Gamma_{m, p_{k}}} \frac{1}{2} \Tr_{\Gamma_{m, p_{k}}, \tilde{g}_{m, s}} (\frac{\partial \tilde{g}_{m, t}}{\partial t} \bigg|_{t = s}) d\Gamma_{m, p_{k}}.
    \end{split}
    \end{align}
    For each $k \in \mathbb{N}$, $s \in \mathcal{A}'$, and $m \geq \max\{m_0, m_1, m_2, m_3\}$, we have 
    \begin{align}\label{desc}
    \begin{split}
        \frac{d}{dt}\bigg|_{t = s} \frac{1}{p_{k}}\tilde{\omega}_{p_{k}}(\hat{M}_{m, \tilde{g}_{m, t}}, \tilde{g}_{m, t}) &\leq \left(\frac{n}{2} + \mathbf{c}t_{k}\right)\frac{1}{p_k} \tilde{\omega}_{p_{k}}(\hat{M}_{m, \tilde{g}_{m, s}}, \tilde{g}_{m, s})\\
        &\leq \frac{n}{2} \Area_{\tilde{g}_{s}}(S_{\tilde{g}_{s}}) + \mathbf{c}t_{k} \Area_{\tilde{g}_{s}}(S_{\tilde{g}_{s}}) + \left(\frac{n}{2} + \mathbf{c}t_{k}\right) \left(t_{k}^{2} +  \frac{g(p_{k})}{p_{k}}\right)\\
        &\leq \frac{n}{2}\Area_{\tilde{g}_{s}}(S_{\tilde{g}_{s}}) + \mathbf{c}_{1}t_{k} + \mathbf{c}_{1} \frac{g(p_{k})}{p_{k}}\\
        &\leq \frac{d}{dt}\bigg|_{t = s} \Area_{\tilde{g}_{t}}(S_{\tilde{g}_{t}}) + \mathbf{c}_{1}t_{k} + \mathbf{c}_{1} \frac{g(p_{k})}{p_{k}},
    \end{split}
    \end{align}
    where $\mathbf{c}_{1} = \mathbf{c}_{1}(M, g, n, S)$ is a constant that can change from line to line. For $k$ large enough, by (\ref{wle}), (\ref{desc}), and the Fundamental Theorem of Calculus, for each $m \geq \max\{m_0, m_1, m_2, m_3\}$ there exists a closed subset $I_{m} \subset \mathcal{A}''$ with positive Lebesgue measure $r_{k}$ (independent of $m$) such that for every $s_{m} \in I_m$, 
   \begin{equation}\label{ubsc}
       -\frac{1}{k^2} \leq \frac{d}{dt}\bigg|_{t = s_{m}} \frac{1}{p_{k}}\tilde{\omega}_{p_{k}}(\hat{M}_{m, \tilde{g}_{m, t}}, \tilde{g}_{m, t}) - \frac{d}{dt}\bigg|_{t = s_{m}} \Area_{\tilde{g}_{t}}(S_{\tilde{g}_{t}}) \leq \frac{1}{k^2}.  
   \end{equation}
    As $m \rightarrow \infty$, we have a subsequence $s_{m} \rightarrow s$ for some $s \in \mathcal{A}''$ (hence $s \in \mathcal{A}'$). By (\ref{mmsc}), for each $s_{m}$ in the subsequence we find a compact, almost properly embedded free boundary minimal hypersurface $\Gamma_{m, p_{k}} \subset (\hat{M}_{m, \tilde{g}_{m, s_{m}}} \setminus S_{\tilde{g}_{m, s_{m}}}, \tilde{g}_{m, s_{m}})$. This minimal surface $\Gamma_{m, p_{k}}$ is multiplicity one with 
    \begin{align}\label{spsc}
    \begin{split}
        ||\Gamma_{m, p_{k}}|| &= \tilde{\omega}_{p_{k}}(\hat{M}_{m, \tilde{g}_{m, s_{m}}}, \tilde{g}_{m, s_{m}}) > 2l, \hspace{10pt} \Index(\Gamma_{m, p_{k}}) \leq p_{k}, \hspace{10pt} \text{ and }\\
        &||p_{k} \cdot ||\Gamma_{m, p_{k}}||^{-1} - ||S_{\tilde{g}_{m, s_{m}}}||^{-1} || < \frac{2}{l}.
    \end{split}
    \end{align}
    Furthermore, for $m \geq \max\{m_0, m_1, m_2, m_3\}$ we have 
    \begin{align}\label{mlsc}
    \begin{split}
        \frac{1}{k^2} &\geq \left|\int_{\Gamma_{m, p_{k}}} \frac{1}{2} \Tr_{\Gamma_{m, p_{k}}, \tilde{g}_{m, s_{m}}} (\frac{\partial \tilde{g}_{m, t}}{\partial t}\bigg|_{t = s_{m}}) d\Gamma_{m, p_{k}} - \frac{d}{dt}\bigg|_{t = s_{m}} \Area_{\tilde{g}_{t}}(S_{\tilde{g}_{t}})
        \right|\\
        &\geq \left|\frac{\tilde{\omega}_{p_{k}}}{p_{k}} \frac{1}{||\Gamma_{m, p_{k}}||} \int_{\Gamma_{m, p_{k}}} \frac{n \varphi}{2} d\Gamma_{m, p_{k}} - \frac{d}{dt}\bigg|_{t = s_{m}} \Area_{\tilde{g}_{t}}(S_{\tilde{g}_{t}})\right| - \mathbf{c}_{2}t_{k}\\
        &\geq \left|\frac{\tilde{\omega}_{p_{k}}}{p_{k}} \frac{1}{||\Gamma_{m, p_{k}}||} \int_{\Gamma_{m, p_{k}}} \frac{n \varphi}{2} d\Gamma_{m, p_{k}} - \frac{n}{2} \Area_{\tilde{g}_{s_m}}(S_{\tilde{g}_{s_m}})\right| - \mathbf{c}_{2}t_{k}\\
        &\geq \frac{n}{2} \Area_{\tilde{g}_{s_m}}(S_{\tilde{g}_{s_m}}) \left|\frac{1}{||\Gamma_{m, p_{k}}||} \int_{\Gamma_{m, p_{k}}} \varphi d\Gamma_{m, p_{k}} - 1\right| - \mathbf{c}_{2}t_{k} - \mathbf{c}_{2} \frac{g(p_{k})}{p_{k}}, 
    \end{split}
    \end{align}
    where $\mathbf{c}_{2} = \mathbf{c}_{2}(M, g, n, S)$ is a constant that can change from line to line. From that we get for $k$ large enough: 
 \begin{equation}\label{klvd}
    \left|\frac{1}{||\Gamma_{m, p_{k}}||} \int_{\Gamma_{m, p_{k}}} \varphi d\Gamma_{m, p_{k}} - 1\right| < \frac{1}{k + 1}. 
\end{equation}
    By the choice of $\varphi$, this further implies that the varifold $\Gamma_{m, p_{k}}$ has most of its mass concentrated in a $1/k$-neighborhood of $S_{\tilde{g}_{s}}$: 
\begin{equation}\label{mcvd}
    \frac{||\Gamma_{m, p_{k}} \setminus N_{1/k}(S_{\tilde{g}_{s}})||}{||\Gamma_{m, p_{k}}||} \leq \frac{1}{k + 1}. 
\end{equation}
    Pick all connected components of $\Gamma_{m, p_{k}}$ whose support intersect the $1/(2k)$-neighborhood of $S$. This gives a varifold $\tilde{\Gamma}_{m, p_{k}}$ which has most of its mass concentrated in a $1/k$-neighborhood of $S_{\tilde{g}_{s}}$: 
\begin{equation}\label{scvd}
    \frac{||\tilde{\Gamma}_{m, p_{k}} \setminus N_{1/k}(S_{\tilde{g}_{s}})||}{||\tilde{\Gamma}_{m, p_{k}}||} \leq \frac{1}{k}. 
\end{equation}

\noindent By fixing $k$ and varying $m$, we obtain a sequence of compact, almost properly embedded free boundary minimal hypersurfaces $\{\tilde{\Gamma}_{m, p_{k}}\}$. Moreover, they admit a uniform bound on the area and index: 
    \begin{align*}
    \Area_{\tilde{g}_{m, s_{m}}}(\tilde{\Gamma}_{m, p_{k}}) \leq \Area_{\tilde{g}_{m, s_{m}}}(\Gamma_{m, p_{k}}) &= \tilde{\omega}_{p_{k}}(\hat{M}_{m, \tilde{g}_{m, s_{m}}}, \tilde{g}_{m, s_{m}}) \leq p_{k} \cdot 2^{\frac{n}{2}} \Area_{g}(S) + 2 p_{k} t_{k} + g(p_{k}), \\
    \Index(\tilde{\Gamma}_{m, p_{k}}) &\leq \Index(\Gamma_{m, p_{k}}) \leq p_{k}.
    \end{align*}

    \begin{figure}[h]
\setlength{\unitlength}{1cm}
\centering
\begin{picture}(11, 5)(0,0)
\put (2,0.5) {\includegraphics[scale = 0.6]{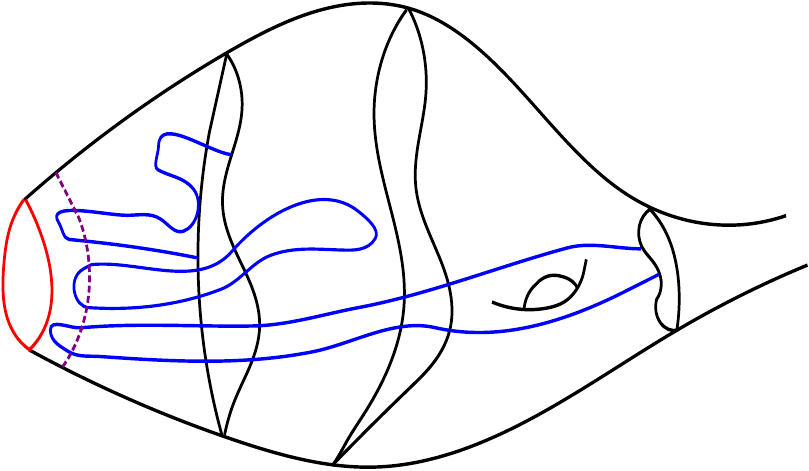}}
\put(2.0, 1.8){\vector(-1, -2){0.2}}
\put(1.6, 1.0){$S$}
\put(4.2, 0.7){\vector(0, -1){0.3}}
\put(3.9, 0){$\partial M_1$}
\put(5.5, 0.4){\vector(0, -1){0.3}}
\put(5.2, -0.3){$\partial M_2$}
\put(8.9, 1.8){\vector(0, -1){0.3}}
\put(8.6, 1.1){$\partial M_3$}
\put(3.8, 4){\vector(0, 1){0.9}}
\put(3.5, 5.1){$\tilde{\Gamma}_{1, p_k}$} 
\put(5.4, 3.4){\vector(0, 1){2.2}}
\put(5.1, 5.8){$\tilde{\Gamma}_{2, p_k}$} 
\put(7.7, 2.9){\vector(1, 2){0.5}}
\put(8.2, 4){$\tilde{\Gamma}_{3, p_k}$} 
\end{picture}
\caption{A schematic illustration of a sequence of minimal hypersurfaces $\{\tilde{\Gamma}_{m, p_{k}}\}$.} 
\label{fig:aa}
\end{figure}

    Given that $\tilde{g}_{s} \in \Int(\mathcal{T}_{\infty})$, there exists a large integer $m''$ such that for each $m \geq m''$, $\tilde{\Gamma}_{m, p_{k}}$ is contained in the interior of $M_{m'}$ and hence closed. Since $\tilde{g}_{m, s_{m}} \rightarrow \tilde{g}_{s}$ smoothly and locally uniformly, by taking a subsequence limit as $m \rightarrow \infty$ we get a closed, embedded minimal hypersurface $\Sigma$ in $(M, \tilde{g}_{s})$ possibly with multiplicities. By assumption, $\tilde{g}_{s}$ is embedded bumpy and the $2$-sided double cover of any $1$-sided, closed, embedded minimal hypersurface in $(M, \tilde{g}_{s})$ has no positive Jacobi field. A standard Jacobi field argument implies that $\Sigma$ is non-degenerate and multiplicity one. From (\ref{mmsc}), (\ref{scvd}), and the following facts 
    \begin{align}\label{mlae}
    \begin{split}
        ||\tilde{\Gamma}_{m, p_{k}}|| \geq \frac{k}{k + 1} \cdot ||\Gamma_{m, p_{k}}|| &\geq \frac{2k}{k + 1} l > \frac{5l}{4} \text{ and }\\
        \lim_{m \rightarrow \infty}|| \tilde{\Gamma}_{m, p_{k}}|| &= ||\Sigma||,
    \end{split}
    \end{align}
    we deduce that for $k$ large enough, 
    \begin{equation}\label{klae}
        ||\Sigma|| > l, \hspace{10pt} |p_{k} \cdot ||\Sigma||^{-1} - ||S_{\tilde{g}_{s}}||^{-1}| < \frac{1}{l}, \hspace{10pt} \text{ and } \frac{||\Sigma \setminus N_{1/k}(S_{\tilde{g}_{s}})||}{||\Sigma||} \leq \frac{1}{k}. 
    \end{equation}

\noindent By Section \ref{twopointfour} and the quantitative constancy theorem, (\ref{klae}) implies that for some constant $C = C(M, g, n, S)$: 
    \begin{equation}\label{qctvd}
        \mathbf{F}\left(\frac{[S_{\tilde{g}_{s}}]}{||S_{\tilde{g}_{s}}||}, \frac{[\Sigma]}{||\Sigma||} \right) \leq 
        \frac{C}{\sqrt{k}}. 
    \end{equation}
    Define $\tilde{h}$ a function of $p_{k}$ by $\tilde{h}(p_{k}) = C/\sqrt{k}$. By (\ref{klae}), we get for large $k$:
    \begin{equation}\label{klhf}
        \frac{C}{\sqrt{k}} = \tilde{h}(p_{k}) = h(||\Sigma||),
    \end{equation}
    where $h$ is a function of $||\Sigma||$. As $k \rightarrow \infty$, we have $p_{k} \rightarrow \infty$, $||\Sigma|| \rightarrow \infty$, and $h(||\Sigma||) \rightarrow 0$. Hence for a large $k$, we perturb the $C^{q}$ metric $\tilde{g}_s$ to a smooth metric $g' \in \mathcal{F}_{\thin} \cap \Int(\mathcal{T}_{\infty})$ with $||g' - g||_{C^{l}} \leq 1/l$, and the proof is completed by (\ref{klae}), (\ref{qctvd}), (\ref{klhf}), and non-degeneracy of $\Sigma$.
\end{proof}

\section{Generic Scarring for minimal hypersurfaces along stable hypersurfaces} \label{sectionfour} 
\begin{proof}[Proof of Theorem \ref{MAIN}]
    Let $U \subset M$ be a compact domain with no minimal boundary component and let $A > 0$. We say that a smooth metric $g \in \mathcal{F}_{\thin} \cap \Int(\mathcal{T}_{\infty})$ on $M$  satisfies property $(P_{A, U})$ if: \\
    \\
    $(P_{A, U})$ any connected, closed, embedded, stable minimal hypersurface $S \subset (U, g)$ with area at most $A$ has a non-degenerate $2$-sided double cover. If moreover $S$ is $2$-sided, there is a closed, embedded, non-degenerate minimal hypersurface $\Sigma$ satisfying 
    \begin{enumerate}[label=(\roman*)]
    \item $\Sigma \cap S = \emptyset$,
    \item $A < ||\Sigma||$,
    \item $\mathbf{F}\left( \frac{[S]}{||S||}, \frac{[\Sigma]}{||\Sigma||} \right) \leq h(||\Sigma||)$.
    \end{enumerate}
    
    The set of metrics satisfying $(P_{A, U})$ is denoted by $\mathcal{M}_{A, U}$. We want to show that for all $A > 0$ and $U \subset M$ a compact domain with no minimal boundary component, $\mathcal{M}_{A, U}$ is open and dense in $\mathcal{F}_{\thin} \cap \Int(\mathcal{T}_{\infty})$. Then for any compact exhaustion $\{U_{m}\}$ of $M$ (each $U_{m}$ is a compact domain with no minimal boundary component), $\bigcap_{m \in \mathbb{N}} \bigcap_{A \in \mathbb{N}} \mathcal{M}_{A, U_{m}}$ would be a $C^{\infty}$-generic subset of $\mathcal{F}_{\thin} \cap \Int(\mathcal{T}_{\infty})$ in the sense of Baire and any metric in this family would satisfy the main theorem.    

    Fix $A > 0$ and $U \subset M$ a compact domain with no minimal boundary component. Given a smooth metric $g \in \mathcal{F}_{\thin} \cap \Int(\mathcal{T}_{\infty})$ on $M$ satisfying the property $(P_{A, U})$, Sharp's compactness result and a standard Jacobi field argument ensure that there exists a small $\epsilon_{0} = \epsilon_{0}(U, g)$ such that no connected, closed, embedded, stable minimal hypersurface in $(U, g)$ has area lying inside $(A, A + \epsilon_0)$, and there are only finitely many such stable minimal hypersurfaces in $(U, g)$ with area at most $A + \epsilon_0$. Openness of $\mathcal{M}_{A, U}$ then follows from the Implicit Function Theorem.

    To prove denseness of $\mathcal{M}_{A, U}$, consider an immersed bumpy metric $g \in \mathcal{F}_{\thin} \cap \Int(\mathcal{T}_{\infty})$ on $M$. Recall that in Section \ref{twopointthree}, we have confirmed that the set of immersed bumpy metrics is dense in $\mathcal{F}_{\thin} \cap \Int(\mathcal{T}_{\infty})$. By generic finiteness, there exists a small $\epsilon_{0} = \epsilon_{0}(U, g)$ such that no connected, closed, embedded, stable minimal hypersurface in $(U, g)$ has area lying inside $(A - \epsilon_{0}, A + \epsilon_0)$, and there are only finitely many such stable minimal hypersurfaces in $(U, g)$ with area at most $2A$. We denote those connected, closed, embedded, stable minimal hypersurfaces with area less than $A$ which are $2$-sided by $\{S_{1}, \ldots, S_{J}\}$, and those which are $1$-sided by $\{T_1, \ldots, T_{k}\}$. Arguing as in the proof of \cite[Theorem 0.1]{Song-Zhou21}, we apply the deformation result Theorem \ref{DT} to each $S_{j}$ successively and eventually get a metric $g_{J}$ with $||g - g_{J}||_{C^{l}} \leq \epsilon$ and non-degenerate minimal hypersurfaces $\Sigma_{1}, \ldots, \Sigma_{J}$. The deformation can be chosen small enough at each step so that the only connected, closed, embedded, stable minimal hypersurfaces with area less than $A$ in $(U, g_{J})$ are $S_{1}, \ldots, S_{J}$ and the minimal hypersurfaces coming from $T_{1}, \ldots, T_{K}$. Thus $g_{J} \in \mathcal{M}_{A, U}$ and we finish the whole proof.  
\end{proof}

\section{Appendix: Proof of Theorem \ref{qctc}}

\begin{proof}
    We closely follow the proof of \cite[Theorem 2.1]{Song-Zhou21}. Thus we describe the argument in a relatively informal way, relying on \cite{Song-Zhou21} for full details.

    Assume $||V||(M) = 1$ and write $d\mu_{S} = dS/||S||$. Our goal is to estimate the quantity 
    \[
    \left| \int_{G_{n}(M)} f(x, P) dV(x, P) - \int_{S} f(p, T_{p} S) d\mu_{S}(p) \right|
    \]
    for any Lipschitz function $f: G_{n}(M) \rightarrow \mathbb{R}$ with $||f||_{\infty} < \infty$ and $\Lip(f) \leq 1$. 
    
    To begin with, we show that $V$ is supported over $n$-planes that are ``almost parallel'' to $\{S_{t}\}$, i.e. there exists a constant $C = C(g, S)$ such that for any $0 < \delta < \epsilon$, 
    \begin{equation}\label{ap}
    \int_{G_{n}(N_{\delta}(S))} (1 - (\nu(x) \cdot \nu_{P})^2) dV(x, P) < C(g, S) \delta. 
    \end{equation} 
    To see this, we first extend $t: N_{\epsilon}(S) \rightarrow (-\epsilon, \epsilon)$ to a compactly supported function $t: M \rightarrow \mathbb{R}$ with $\sup_{b_{k}} ||t||_{C^{2}} \leq C(g, S)$. By plugging the vector field $X = t \nabla t$ into the first variation formula for $V$ and using (\ref{qct}), we obtain the estimate (\ref{ap}). Then the Cauchy-Schwartz inequality leads to 
    \begin{align}\label{csi}
    \int_{G_{n}(N_{\delta}(S))} \dist_{g}(P, T_{x} S_{x}) dV(x, P) &\leq \int_{G_{n}(N_{\delta}(S))} (1 - (\nu(x) \cdot \nu_{P})^2) dV(x, P).
    \end{align} 
    By (\ref{qct}) and (\ref{csi}), we have 
    \begin{align*}
        &\left| \int_{G_{n}(M)} f(x, P) dV(x, P) - \int_{S} f(p, T_{p} S) d\mu_{S}(p) \right|\\ \leq\hspace{3pt}& C(g, S) \sqrt{\delta} + \left| \int_{N_{\delta}(S)} f(\pi(x), T_{\pi(x)} S) d\mu_{V}(x) - \int_{S} f(p, T_{p} S) d\mu_{S}(p) \right|. 
    \end{align*}

    It remains to estimate the second term. Let $f_{S}(p) = f(p, T_{p} S)$ be a function on $S$ and denote its average over $S$ by $f_{\av} = \int_{S} f_{S} d\mu_{S}$. Consider the PDE on $S$ under the assumption that $\int_{S} \varphi d\mu_{S} = 0$: 
    \begin{equation}\label{pdes}
    f_{S} = f_{\av} + \Delta_{S} \varphi. 
    \end{equation} 
    The standard elliptic estimates give $||\varphi||_{C^{2, \alpha}(S)} \leq C(g, S)$. We extend $\varphi: S \rightarrow \mathbb{R}$ to a compactly supported function $\varphi: M \rightarrow \mathbb{R}$ with $\sup_{b_{k}} ||\varphi||_{C^{2}} \leq C(g, S)$. Using the following Hessian estimate
    \begin{equation}\label{hes}
        |\Tr_{S_{x}} (\Hess \varphi)(x) - \Tr_{S_{x}} (\Hess \varphi)(\pi(x))| \leq C(g, S) \delta
    \end{equation}
    together with (\ref{qct}), (\ref{csi}), (\ref{pdes}), and (\ref{hes}), we derive the estimate 
    \begin{align*}
        &\left| \int_{N_{\delta}(S)} f(\pi(x), T_{\pi(x)} S) d\mu_{V}(x) - \int_{S} f(p, T_{p} S) d\mu_{S}(p) \right| \\
        \leq\hspace{3pt}& C(g, S) \sqrt{\delta} + \left| \int_{G_{n}(M)} \divergence_{P}(\nabla \varphi)(x) dV(x, P)\right|.  
    \end{align*}
    Since $V$ is stationary, the second term vanishes and we finish the proof. Clearly, the constant $C = C(g, S)$ can be chosen to be uniformly bounded in a $C^{3}$-neighborhood of $(g, S)$. 
\end{proof}

\bibliographystyle{amsplain}
\bibliography{refs}
\nocite{Allard72, Almgren62, Almgren65, Chodosh-Ketover-Maximo15, Gromov88, Guth09, LiYang20, Pitts81, Schoen-Simon81, Sharp17, Simon83, Sma68, Zhou-Zhu20}
\end{document}